
\documentclass[11pt]{amsart}
\usepackage{amssymb}
\usepackage{amsmath}

\usepackage[plainpages]{hyperref}

\textwidth 6.5truein
\textheight 8.67truein
\oddsidemargin 0truein
\evensidemargin 0truein
\topmargin 0truein

\usepackage[all]{xy}

\CompileMatrices

\let\Bbb\mathbb

\def\>{\relax\ifmmode\mskip.666667\thinmuskip\relax\else\kern.111111em\fi}
\def\<{\relax\ifmmode\mskip-.333333\thinmuskip\relax\else\kern-.0555556em\fi}
\def\vsk#1>{\vskip#1\baselineskip}
\def\vv#1>{\vadjust{\vsk#1>}\ignorespaces}
\def\vvn#1>{\vadjust{\nobreak\vsk#1>\nobreak}\ignorespaces}

\let\Medskip\medskip
\def\medskip{\par\Medskip}
\let\Bigskip\bigskip
\def\bigskip{\par\Bigskip}

\let\Maketitle\maketitle
\def\maketitle{\Maketitle\thispagestyle{empty}\let\maketitle\empty}

\newtheorem{thm}{Theorem}[section]
\newtheorem{cor}[thm]{Corollary}
\newtheorem{lem}[thm]{Lemma}
\newtheorem{prop}[thm]{Proposition}

\newenvironment{proofof}[1]{
\noindent {\it Proof of #1:}}{\hfill \qed \medskip
}

\numberwithin{equation}{section}

\theoremstyle{definition}
\newtheorem{rem}[thm]{Remark}
\newtheorem{example}[thm]{Example}
\newtheorem{defn}[thm]{Definition}

\newtheorem{ack}{Acknowledgements}

\let\mc\mathcal
\let\nc\newcommand

\nc{\es}{\ensuremath}
\nc{\on}{\operatorname}
\nc{\codim}{\on{codim}}
\nc{\Z}{{\mathbb Z}}
\nc{\C}{\es{\mathbb C}}
\nc{\N}{{\mathbb N}}
\nc{\pone}{{\mathbb C}{\mathbb P}^1}
\renewcommand{\P}{\mathbb P}
\nc{\arr}{\rightarrow}
\nc{\larr}{\longrightarrow}
\nc{\al}{\alpha}
\nc{\W}{{\mc W}}
\nc{\la}{\lambda}
\nc{\su}{\widehat{{\mathfrak sl}}_2}
\nc{\g}{{\mathfrak g}}
\nc{\h}{{\mathfrak h}}
\nc{\m}{{\mathfrak m}}
\nc{\n}{{\mathfrak n}}
\nc{\Gm}{\Gamma}
\nc{\La}{\Lambda}
\nc{\gl}{\widehat{\mathfrak{gl}_2}}
\nc{\bi}{\bibitem}
\nc{\om}{\omega}
\nc{\Res}{\on{Res}}
\nc{\gm}{\gamma}
\nc{\Om}{\Omega}
\nc{\Hom}{\es{\on{Hom}}}
\nc{\A}{\es{\mc A}}
\nc{\bAA}{\es{\bar{\A}}}

\nc{\bA}{\es{\bar{A}}}
\nc{\bS}{\es{\bar{S}}}
\nc{\bH}{\bar{H}}
\renewcommand{\d}{{\rm d}}
\nc{\dA}{\d \A}
\nc{\bM}{\es{\bar{M}}}
\nc{\we}{\wedge}

\def\im{\on{im}}

\def\Ann{\on{Ann}}

\def\Res{\on{Res}}

\def\F{\es{\mc F}}
\def\bF{\es{\bar{\mc F}}}
\def\L{{\mc L}}

\let\geq\geqslant

\let\leq\leqslant

\nc{\gln}{\mathfrak{gl}_N}
\nc{\sln}{\mathfrak{sl}_N}

\def\beq{\begin{equation}}
\def\eeq{\end{equation}}
\def\be{\begin{equation*}}
\def\ee{\end{equation*}}

\nc{\bean}{\begin{eqnarray}}
\nc{\eean}{\end{eqnarray}}
\nc{\bea}{\begin{eqnarray*}}
\nc{\eea}{\end{eqnarray*}}
\nc{\bs}{\boldsymbol}
\nc{\Ref}[1]{{\rm(\ref{#1})}}

\nc{\kk}{\es{\Bbbk}}
\nc{\R}{\Bbb R}
\nc{\glN}{\mathfrak{gl}_N}
\nc{\glNt}{\mathfrak{gl}_N[t]}
\nc{\s}{sing}

\nc{\ep}{\epsilon}
\nc{\bla}{{\bs\la}}
\nc{\ga}{\gamma}
\nc{\Ga}{\Gamma}
\nc{\Ss}{{\mathcal S}}
\nc{\CC}{{\mathcal C}}
\nc{{\bal}}{{\Ann(\bar A^l)}}
\nc{\Sing}{{\on{Sing}}}
\nc{\id}{\on{id}}
\nc{\sgn}{\on{sgn}}
\nc{\FF}{\es{\on{Flag}}}
\renewcommand{\phi}{\varphi}

\title{The contravariant form on singular vectors of a projective arrangement}

\author[M. Falk]{Michael J. Falk}
\address{Department of Mathematics and Statistics,
Northern Arizona University\\
 Flagstaff, AZ 86011, USA}
\email{\href{mailto:michael.falk@nau.edu}{michael.falk@nau.edu}}
\urladdr{\href{http://www.cefns.nau.edu/~falk/}%
{www.cefns.nau.edu/\char'176falk}}

\author[A. Varchenko]{Alexander N. Varchenko$^*$}
\address{Department of Mathematics, University of
North Carolina at Chapel Hill\\
Chapel Hill, NC 27599, USA}
\email{\href{mailto:anv@email.unc.edu}{anv@email.unc.edu}}
\urladdr{\href{http://www.math.unc.edu/Faculty/av/}
{www.math.unc.edu/Faculty/av/}}
\thanks{{$^*$}Partially supported by NSF grant DMS-1101508}

\begin{document}
\maketitle
\begin{abstract}
We define the flag space and space of singular vectors for an arrangement \A\
of hyperplanes in projective space equipped with a system of weights $a \colon \A \to \C$.
We show that the contravariant bilinear form of the corresponding weighted
central arrangement induces a well-defined form on the space of singular vectors
 of the projectivization.
If $\sum_{H\in\A} a(H)=0$, this form is naturally isomorphic to the restriction
to the space of singular vectors of the
contravariant form of any affine arrangement obtained from \A\ by dehomogenizing
 with respect to one of its hyperplanes.

\end{abstract}
\section{Introduction}
\label{sec:intro}
Let $\A=\{H_1,\ldots, H_n\}$ be an arrangement of affine hyperplanes in $\C^\ell$.
Let $f_i \colon \C^\ell \to \C$ be an affine linear functional with zero locus
 $H_i$, for $1\leq i\leq n$. Let $M=M(\A)=\C^\ell - \bigcup_{i=1}^n H_i$ be the complement
 to the arrangement.
If $W$ is a \C-vector space, then $W^*$
denotes its dual space. Let $\C^\times = \C-\{0\}$.

Let $\omega_i=d \log(f_i)$ for $1\leq i\leq n$. Denote by $A$ the
$\C$-subalgebra of the holomorphic De Rham complex of $M$ generated by the closed forms
 $1, \omega_1, \ldots, \omega_n$. The algebra $A$ is graded,
 $A=\oplus_{p= 0}^\ell A^p$, and called the Arnol'd-Brieskorn-Orlik-Solomon algebra
 or
  the OS algebra of \A.
  The dual space  $\F=\F(\A):=\oplus_{p\geq 0} \, \F^p$ of $A$
  is called the {\em flag space} of \A, \cite{SV91}.

Let $a=(a_1,\ldots,a_n) \in \C^n$ be a vector of weights. The {\em contravariant form} of
 the weighted arrangement $(\A,a)$ is the symmetric bilinear form $S=\oplus S_p \colon \F \otimes
 \F \to \C$, where $S_p \colon \F^p \otimes \F^p \to \C$ is defined by
\begin{equation}\label{eqn:contra}
S_p(F,F')=\sum_J a_J F(\omega_J)F'(\omega_J).
\end{equation}
The sum is over all sequences $J=(j_1,\cdots , j_p)$ with $1\leq j_1< \cdots <j_p\leq n$,
 $a_J=\prod_{i=1}^p a_{j_i}$ and $\omega_J=\omega_{j_1} \we \cdots \we \omega_{j_p}$,  \cite{SV91}.

In particular, if
$\{F_1, \ldots, F_n\} \subseteq \F^1$ is the basis dual to the basis
$\{\omega_1, \ldots, \omega_n\}$ of $A^1 \cong \C^n$, then
\begin{equation}\label{eqn:f1}
S_1(F_i,F_j)=a_i\delta_{ij}.
\end{equation}

The contravariant form  has many remarkable properties, see \cite{SV91,V95, V06, V11}.
It is a generalization of the Shapavalov form associated to a tensor product of
highest weight
representations of a simple Lie algebra -- for this application \A\ is
a discriminantal arrangement and $a$ is determined by the representations.

 The space $\F$ has a combinatorially defined differential $d \colon \F^p \to \F^{p+1}$.
 The space $A$ has a differential $\delta_a \colon A^p \to A^{p+1}$
 defined by multiplication by  $\om_a:=\sum_{i=1}^na_i\om_i$.
 The contravariant form $S$ induces a morphism of complexes $\psi \colon (\F,d) \to (A,\delta_a)$,
 see \cite{SV91} and Section \ref{sec:flag}. The pair $(\F,d)$ is the {\em flag complex} of \A.

Let $\Sing(\F^\ell)=\Sing_a(\F^\ell) \subseteq \F^\ell$ be the annihilator of $\omega_a \we A^{\ell-1}$.
It is called the subspace of {\em singular vectors} of $\F^\ell$, relative to $a$.
 This terminology is introduced in \cite{ V06} and motivated by \cite{SV91}.
 In \cite{SV91} the subspace
 $\Sing(\F^\ell)$ for a discriminantal arrangement is interpreted as the subspace of
 singular vectors of a tensor product of Verma modules over a Kac-Moody algebra.
The inclusion $\Sing(\F^\ell) \hookrightarrow (A^\ell)^*$ induces an isomorphism
\[
\Sing(\F^\ell) \to (H^\ell(A,\delta_a))^*=(A^\ell/(\omega_a \we A^{\ell-1}))^*.
\]

Let $\Phi_a=\prod_i f_i^{\,-a_i}$ be the master function associated with $(\A,a)$, and let $\L_a$ be the
rank-one local system on $M$ whose local sections are the multiples of single-valued branches of $\Phi_a$.
The inclusion of $(A,\delta_{ca})$ into the twisted algebraic de Rham complex of $\L_{ca}$ induces an isomorphism
\[
H^*(A,\delta_{ca}) \cong H^*(M, \L_{ca})
\]
for generic $c$ \cite{SV91}. Since $\Sing_{ca}(\F^\ell)=\Sing_a(\F^\ell)$ for any nonzero $c$,
this implies that $\Sing_a(\F^\ell)$ is isomorphic to the local system homology $H_\ell(M,\L_{-ca})$ for generic $c$.

An important object is the restriction of the contravariant form $S_\ell$ to the subspace
$\Sing(\F^\ell)$. It relates linear and nonlinear characteristics of the weighted arrangement $(\A,a)$.
For example, the rank of the restriction of $S_\ell$ to $\Sing(\F^\ell)$
bounds from above
the number of critical points of the master function $\Phi_a,$
see \cite{V11}. For other properties see in \cite{V06, V11}.

The complement $M$ to $\A$ in $\C^\ell$ can be identified with the complement
to the arrangement of projective hyperplanes $\A_\infty$ in projective
 space $\P^{\ell}$ consisting
of the closures of the hyperplanes of \A\ together with the hyperplane $H_\infty$ at
infinity. The purpose of this note is  to develop the notions of
the flag space \F,
the space $\Sing(\F^\ell)$ of singular vectors of $\F^\ell$, and
the bilinear form $S_\ell\vert_{\Sing(\F^\ell)}$ on
$\Sing(\F^\ell)$, starting with
the projective arrangement $\A_\infty$ in $\P^\ell$. Namely,
 the purpose is to  define these objects in such
a way that they will not depend on the choice of the
hyperplane at infinity of the projective arrangement.

The following example  illuminates our constructions.

\begin{example}
\label{ex:simple}
Let \A\ be the affine arrangement in $\C^1$
 of $n$ distinct points $z_1,\ldots, z_n$. Then
  $\omega_i=d \log(x-z_i)$
   for $1\leq i\leq n$. We have $A^0 \cong \C$, $A^1 \cong \C^n$ and $A^p=0$ for $p\geq 2$.
Let $a\in \C^n$ be a vector of weights. The contravariant form of $(\A,a)$ on $\F^1$ is
 given by \eqref{eqn:f1}. The image $\omega_a \we A^0$
 of $\delta_a$ in $A^1$ is spanned by $\omega_a$, and
  the subspace $\Sing(\F^1)\subset \F^1$ of singular vectors is the subspace
$
\{ \sum_{i=1}^n c_iF_i \in\F^1 \ \mid\ \sum_{i=1}^n c_ia_i=0\}.
$

Our construction identifies the pair $(\Sing(\F^1), S_1\vert_{\Sing(\F^1)})$ with the pair
\bea
\left((\Ann(\tilde{\omega}_a \we \tilde{A}^0)+\Ann(q^*A^1))/\Ann(q^*A^1),\ {}
 \tilde S_1\vert_{(\Ann(\tilde{\omega}_a \we \tilde{A}^0)+\Ann(q^*A^1))/\Ann(q^*A^1)}\right)
\eea
described below.

Let $[u:v]$ be homogeneous coordinates on $\P^1$,
 with $x=\frac{v}{u}.$ The projectivization $\A_\infty$ of \A\
 is the arrangement in
  $\P^1$  of the points
$
p_1=[1:z_1], \ldots,  p_n=[1:z_n]
$
and the point $p_0=[1:0]$ at infinity.
The weight of $p_i$ is $a_i$ for $1\leq i\leq n$ and the weight of $p_0$ is  $a_0=-\sum_{i=1}^n a_i$.

In our construction
we use the associated central arrangement in $\C^2$, the cone $\tilde{\A}$ of $\A_\infty$,
 consisting of the lines $v-z_iu=0$ for $1\leq i\leq n$ and the line $u=0$.
Introduce the following one-forms on $\C^2$:
  $\tilde{\omega}_i=d\log (v-z_iu)$ for $1\leq i\leq n$, and
   $\tilde{\omega}_0=d\log(u)$.
The arrangement $\tilde \A$ is weighted with the weights $\tilde a=(a_0,\dots, a_n)$.
We will denote by $\tilde M, \tilde A, \tilde{\F}, \tilde{\omega}_a, \tilde S$ the complement,
Orlik-Solomon algebra, flag space of $\tilde \A$, special element, and the contravariant form of
$(\tilde \A, \tilde a)$,
respectively.

The orbit map $q \colon \tilde M \to \tilde M/\C^\times = \C - \{z_1, \ldots, z_n\}$
induces an injection $q^*\colon A \to \tilde A$
whose image is
the subalgebra generated by
$
\{\sum_{i=0}^n \la_i \tilde{\omega}_i \in \tilde{A}^1\ \mid\ \sum_{i=0}^n \la_i=0\}\subset \tilde{A}^1.
$
One computes
\bea
 q^*(\omega_i)= q^*(d \log (x-z_i))= d \log(\frac{v}{u} - z_i)=
\tilde \omega_i - \tilde \omega_0.
\eea
Then the special element $\omega_a$ of $A^1$
is mapped by $q^*$
to the special element $\tilde \omega_a=\sum_{i=0}^n a_i\tilde{\omega}_i$ of $\tilde A^1$.
Identifying $A^1$ with $q^*A^1$, the flag space $\F^1=(A^1)^*$
is isomorphic to the quotient of
 $\tilde{\F}^1$ by the annihilator $\Ann(q^*A^1)\subset \tilde F^1$
 of $q^*A^1\subset \tilde A^1$.
The subspace $\Ann(q^*A^1)$ is spanned by $\sum_{i=0}^n \tilde{F}_i$.
(Notice that in this consideration the index 0 does not play any
special role.)
The subspace  $\Ann(\tilde{\omega}_a \we \tilde{A}^0)$ of $\tilde\F^1$ consists of
flags $\sum_{i=0}^n c_i\tilde{F}_i$ such that $\sum_{i=0}^n c_ia_i=0$.
This subspace is orthogonal to the subspace $\Ann(q^*A^1)$ relative to
 the contravariant form of $\tilde{\A}$. Indeed
 we have
$
\tilde S_1(\sum_{i=0}^n \tilde{F}_i \, , \sum_{i=0}^n c_i\tilde{F}_i)=
\sum_{i=0}^n c_ia_i= 0.
$
Thus, the contravariant form $\tilde{S}_1$ induces a well-defined form on
 the image of
 $\Ann(\tilde{\omega}_a \we \tilde{A}^0)$ in $\tilde{\F}^1/\Ann(q^*A^1)$,
 namely, a form on
 \bea
 (\Ann(\tilde{\omega}_a \we \tilde{A}^0)+\Ann(q^*A^1))/\Ann(q^*A^1)
 \cong
 \Ann(\tilde{\omega}_a \we \tilde{A}^0)/(\Ann(\tilde{\omega}_a \we \tilde{A}^0)\cap\Ann(q^*A^1)).
 \eea

 The flags $\tilde{F_1}, \ldots, \tilde{F_n}$ induce
 a basis
 of $\tilde{\F}^1/\Ann(q^*A^1)$. Using this basis, we see that
 the form induced by $\tilde{S}_1$ on  $(\Ann(\tilde{\omega}_a
 \we \tilde{A}^0)+\Ann(q^*A^1))/\Ann(q^*A^1)$
  corresponds to the restriction of the
  original form $S_1$ to the subspace $\Sing(\F^1)$ under the
   isomorphism of $\F^1$ with $\tilde{\F}^1/\Ann(q^*A^1)$.

   Notice that the form $\tilde S_1$ does not induce a well-defined
   form on $\F^1=\tilde \F/\Ann(q^*A^1)$ -- the extension of $S_1\vert_{\Sing(\F^1)}$
   defined by \eqref{eqn:f1} depends on the choice of hyperplane at infinity.

\end{example}

In general, for any  weighted affine   arrangement $(\A,a)$ in $\C^\ell$,
we identify the pair
\linebreak
$(\Sing(\F^\ell), S_\ell\vert_{\Sing(\F^\ell)})$
with the pair
\bea
\left((\Ann(\tilde{\omega}_a \we \tilde{A}^{\ell-1})+\Ann(q^*A^\ell))/\Ann(q^*A^\ell), \tilde S_\ell\vert_{(\Ann(\tilde{\omega}_a \we \tilde{A}^{\ell-1})+\Ann(q^*A^\ell))/\Ann(q^*A^\ell)}\right)
\eea
expressed in terms of the cone $\tilde \A$ of the projectivization $\A_\infty$ of $\A$.

\medskip

Our statement that  the pair $(\Sing(\F^\ell), S_\ell\vert_{\Sing(\F^\ell)})$
can be constructed
in terms of $\A_\infty$,  without choosing a particular hyperplane at infinity,
is analogous to the following fact from representation
theory. Let $V_{\La_i}$, $i=0,\dots,n$, be irreducible finite dimensional
highest weight
representations  of a simple Lie algebra. Here $\La_i$ is the highest weight of $V_{\La_i}$.
Let $\La_0^\vee$ be the highest weight of the representation dual to $V_{\La_0}$.
 Let $S_i$ be the Shapavalov form on $V_{\La_i}$.
Let
$\Sing (\otimes_{i=0}^{n} V_{\La_i})[0]\subset \otimes_{i=0}^{n} V_{\La_i}$
 be the subspace
of singular vectors of weight zero and
$\Sing (\otimes_{i=1}^{n} V_{\La_i})[\La_0^\vee]\subset \otimes_{i=1}^{n} V_{\La_i}$
 the subspace
of singular vectors of weight $\La^\vee_0$. Then the pair
$(\Sing (\otimes_{i=1}^{n} V_{\La_i})[\La_0^\vee], (\otimes_{i=1}^{n} S_i)
\vert_{\Sing (\otimes_{i=1}^{n} V_{\La_i})[\La_0^\vee]})$
is isomorphic to the pair
$(\Sing (\otimes_{i=0}^{n} V_{\La_i})[0], (\otimes_{i=0}^{n} S_i)
\vert_{\Sing (\otimes_{i=1}^{n} V_{\La_i})[0]})$.

\section{Flag complex and contravariant form of a central arrangement}
\label{sec:flag}

We recall in more detail some of the theory of flag complexes from \cite{SV91}.
The following notation, which differs from the notation of \S \ref{sec:intro},
will be used throughout the rest of the paper.
For general background on arrangements see \cite{OT92}.

Suppose $\A=\{H_0,\ldots, H_n\}$ is a
central arrangement in $\C^{\ell+1}$.
Let  $f_0, \ldots, f_n \in (\C^{\ell+1})^*$ with $H_i=\ker(f_i)$ for $0\leq i\leq n$.
Let $\omega_i=\frac{df_i}{f_i}$ for $0\leq i\leq n$, and let $A$ be the
OS algebra of \A, as defined in \S \ref{sec:intro}. Let $E$ be the
graded exterior algebra over \C\ with generators
$e_0, \ldots, e_n$ of degree one. Let $\partial \colon E^p \to E^{p-1}$
be defined by
\[
\partial(e_{j_1} \we \cdots \we e_{j_p})=\sum_{i=1}^p (-1)^{i-1} e_{j_1}
\we \cdots \we \hat{e}_{j_i} \we \cdots \we e_{j_p},
\]
where $\hat{\hspace*{1em}}$ denotes deletion.
If $J=(j_1, \ldots, j_p)$, denote the product $e_{j_1} \we \cdots \we e_{j_p}$
by $e_J$. Say $J$ is {{\em dependent} if $\{f_i \mid i \in J\}$ is linearly
 dependent in $(\C^{\ell+1})^*$. Let $I$ be the ideal of $E$ generated by
$\{\partial e_J \mid J \ \text{is dependent}\}$.  By \cite{OS80}, the
surjection $E \to A$ sending $e_i$ to $\omega_i$ has kernel $I$.
We tacitly identify $A$ with $E/I$. The map $\partial$ induces a well-defined map
  $\partial \colon A \to A$, a graded derivation of degree $-1$, and $(A,\partial)$ is
   a chain complex.

Let $L=L(\A)$ be the intersection lattice of \A, the set of intersections
of subcollections
of \A, partially-ordered by reverse inclusion. Let $\FF=\oplus_{p=0}^{\ell+1} \FF^p$
 be the graded
\C-vector space with $\FF^p$ having basis consisting of chains $(X_0 < \cdots < X_p)$ of $L$
satisfying $\codim(X_i)=i$ for $0\leq i\leq p$.
Such a chain will be called a {\em flag}.
For each ordered subset $J=(j_1, \ldots, j_p)$ of $\{0,\ldots,n\}$, let $\xi(J)$
be the chain
\(
(X_0 < \cdots<  X_p)
\)
of $L$, where $X_0=\C^{\ell+1}$ and $X_i=\bigcap_{k=1}^i H_{j_k}$ for $1\leq i\leq p$. Note that $\xi(J)$ is a flag if and
 only if $\{H_i \mid i \in J\}$ is independent in \A. If $\pi$ is a permutation
 of $\{1, \ldots, p\}$, let $J^\pi=(j_{\pi(1)}, \dots, j_{\pi(p)})$.
For any flag $F \in \FF^p$ and any ordered $p$-subset $J$ of
$\{1, \ldots, n\}$, there is at most one permutation $\pi$ such that  $F=\xi(J^\pi)$.

Define a bilinear pairing
\[
\langle \hspace*{1em}, \hspace*{1em} \rangle \colon \FF^p \otimes E^p \to \C
\]
by
\beq
\label{eqn:pairing}
\langle F, e_J \rangle = \begin{cases}
\sgn(\pi) \ \ \text{if} \ \xi(J^\pi)=F \\
0 \  \ \ \ \ \ \ \ \ \  \text{otherwise}
\end{cases},
\eeq
for every flag $F$ in $\FF^p$ and ordered $p$-subset $J$ of $\{0, \ldots, n\}$.

\begin{prop}[\cite{SV91}]
\label{prop:well} 
$\langle F, \partial e_J\rangle =0$ for every $F \in \FF^p$ and dependent $(p+1)$-tuple $J$. Moreover, if $(X_0 < \cdots < X_{i-1} < X_{i+1} < \cdots X_p)$ is a chain in $L$
 with $\codim(X_j)=j$ then
\begin{equation}\label{eqn:gap}
\left\langle \sum_{X_{i-1}<X<X_{i+1}} (X_0 < \cdots < X_{i-1} <X< X_{i+1}
 < \cdots X_p), \ e_J\right\rangle = 0,
\end{equation}
for every ordered $p$-subset $J$ of $\{0, \ldots, n\}$.
\end{prop}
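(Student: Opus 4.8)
The plan is to reduce everything to the dual pairing between $E$ and the OS algebra $A = E/I$, using the combinatorial description of $L(\A)$ in terms of the functionals $f_i$. For the first assertion, I would argue as follows. Fix a flag $F \in \FF^p$ and a dependent $(p+1)$-tuple $J = (j_0, \ldots, j_p)$. If no $p$-subset $J_i := (j_0, \ldots, \hat{j_i}, \ldots, j_p)$ of $J$ satisfies $\xi(J_i^\pi) = F$ for any permutation $\pi$, then every term in $\langle F, \partial e_J\rangle$ vanishes by the definition \eqref{eqn:pairing}, and we are done. Otherwise, after reindexing we may assume $\xi(J_0) = F$ (up to sign), so that $F = (X_0 < \cdots < X_p)$ with $X_i = H_{j_1} \cap \cdots \cap H_{j_i}$, and in particular $\{H_{j_1}, \ldots, H_{j_p}\}$ is independent of rank $p$. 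Since $J$ is dependent and $J_0$ is independent, $H_{j_0}$ must contain $X_p = H_{j_1} \cap \cdots \cap H_{j_p}$; this is the key geometric observation, forcing all the remaining independent $p$-subsets of $J$ to share the same intersection flag up to the position where $j_0$ is inserted. One then checks that the surviving terms come in the expected signed pairs: the contribution of $e_{J_i}$ (when $J_i$ is independent and $\xi(J_i^\pi) = \pm F$) is governed by where $j_0$ sits in the ordered tuple relative to the other indices, and the alternating signs $(-1)^{i-1}$ in $\partial$ together with the signs $\sgn(\pi)$ from \eqref{eqn:pairing} cancel in pairs. This is essentially a careful bookkeeping argument comparing two ways of inserting the extra index $j_0$ into an ordered flag.

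For the second assertion, equation \eqref{eqn:gap}, I would first reduce to the generic case where the chain $(X_0 < \cdots < X_{i-1} < X_{i+1} < \cdots < X_p)$ actually arises from an independent subset, since otherwise the pairing with every $e_J$ vanishes term by term. Given an ordered $p$-subset $J = (k_1, \ldots, k_p)$, the pairing $\langle \xi(J^\pi), e_J\rangle$ is nonzero only for flags of the form $\xi(J^\pi)$, so on the left-hand side of \eqref{eqn:gap} only those $X$ with $X_{i-1} < X < X_{i+1}$ for which the resulting flag equals some $\xi(J^\pi)$ contribute. Such $X$ must be $H_{k_m} \cap X_{i-1}$ for one of the generators $H_{k_m}$ not already used among $X_0, \ldots, X_{i-1}$; the condition $X < X_{i+1}$ pins down which subset of the $k_m$'s can occur, and the signs $\sgn(\pi)$ again alternate appropriately. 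Concretely, I expect \eqref{eqn:gap} to follow from the first assertion applied to the dependent tuple obtained by adjoining to $J$ (or rather to the sub-tuple cutting out $X_0 < \cdots < X_{i-1} < X_{i+1} < \cdots < X_p$) an index corresponding to a hyperplane through $X_{i+1}$, since $\partial$ of that dependent product produces exactly an alternating sum of flags of the shape appearing in \eqref{eqn:gap}. Thus \eqref{eqn:gap} is really a repackaging of $\langle F, \partial e_J\rangle = 0$ rather than an independent fact.

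The main obstacle I anticipate is the sign bookkeeping in the first part: one must track simultaneously the sign $(-1)^{i-1}$ coming from $\partial$, the sign $\sgn(\pi)$ coming from the permutation needed to align the ordered tuple $J_i$ with the flag $F$, and the way inserting $j_0$ at different positions changes both. The cleanest route is probably to choose a fixed ordering of $J$ in which $j_0$ is the smallest (or largest) index and a representative permutation realizing $F$, reducing to comparing two adjacent deletions at a time, so that only transposition signs need to be tracked; the dependence relation then guarantees that the two nonvanishing terms in each adjacent comparison cancel. Once the signs are organized this way, both \eqref{eqn:pairing}-compatibility and \eqref{eqn:gap} fall out, and the proposition is proved.
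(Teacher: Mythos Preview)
The paper does not supply a proof of Proposition~\ref{prop:well}; it is quoted from \cite{SV91}. So there is no ``paper's own proof'' to compare against, and your proposal should be read as an attempt to reconstruct the argument from \cite{SV91}.

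Your outline for the first assertion is essentially the right one. Once some $J_i$ yields the flag $F$, dependence of $J$ forces the extra hyperplane $H_{j_0}$ to contain $X_p$, and more precisely there is a unique minimal $m$ with $H_{j_0}\supseteq X_m$; one then checks that exactly one other deletion $J_b$ (the one removing the hyperplane occupying slot $m$ in the flag) again yields $F$, and the two signs cancel. Your phrase ``cancel in pairs'' is therefore accurate, but the proof needs this minimal-$m$ observation to pin down \emph{which} pair; as written your sketch does not isolate it, and the concluding paragraph about ``adjacent deletions'' is vague enough that a reader cannot reconstruct the argument.

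For the second assertion your plan has a genuine gap. The claim that \eqref{eqn:gap} ``is really a repackaging'' of the first assertion via $\partial$ of an enlarged dependent tuple does not go through as stated: the first assertion fixes a flag and varies the monomial, while \eqref{eqn:gap} fixes the monomial $e_J$ and varies the flag, so one is not an instance of the other. The clean direct argument is different: for independent $J$, the permutations $\pi$ for which $\xi(J^\pi)$ has the shape $(X_0<\cdots<X_{i-1}<\,\cdot\,<X_{i+1}<\cdots<X_p)$ are paired by the transposition of positions $i$ and $i{+}1$; this involution changes $\sgn(\pi)$ and changes only the $i$th term of the flag, so the sum over $X$ in \eqref{eqn:gap} cancels termwise. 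For dependent $J$ every summand is already zero. I would replace your reduction argument with this involution.
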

Let $\F=\oplus_{p=0}^{\ell+1}\, \F^p$ be the quotient of $\FF$ by the (homogeneous) subspace spanned by
the sums
\beq
\sum_{X_{i-1}<X<X_{i+1}} (X_0 < \cdots < X_{i-1} <X< X_{i+1}
 < \cdots X_p)
\label{egn:gap}
\eeq
as $(X_0 < \cdots < X_{i-1} < X_{i+1} < \cdots X_p)$ ranges over all chains in $L$
with $\codim(X_j)=j$.

 Denote the image of $(X_0<\cdots<X_p)$ in $\F^p$ by $[X_0< \cdots <X_p]$.
 By  Proposition~\ref{prop:well},
$\langle \hspace*{1em}, \hspace*{1em} \rangle$ induces a well-defined
 bilinear pairing $
\langle \hspace*{1em}, \hspace*{1em} \rangle \colon \F^p \otimes A^p \to \C$.

The pairing $\langle \hspace*{1em}, \hspace*{1em} \rangle$ is a combinatorial model
of the integration pairing of the ordinary homology and cohomology of the complement $M$ with coefficients in \C,
see \cite{SV91}.

\begin{thm}[\cite{SV91}]
\label{thm:nondeg}
The pairing $\langle \hspace*{1em}, \hspace*{1em} \rangle \colon \F^p
\otimes A^p \to \C$ is nondegenerate.
\end{thm}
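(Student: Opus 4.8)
The plan is to establish nondegeneracy of $\langle\ ,\ \rangle\colon \F^p\otimes A^p\to\C$ by producing, for each nonzero class in $\F^p$, an element of $A^p$ that pairs nontrivially with it, and symmetrically. The natural strategy is to exhibit dual bases. Recall from \cite{OS80} that the \emph{no-broken-circuit} monomials $e_J$ (with respect to a fixed linear order on $\{0,\dots,n\}$) descend to a basis of $A^p$, indexed by the independent $p$-subsets $J$ of $\{0,\dots,n\}$ that contain no broken circuit. I would first argue that the corresponding flags $\xi(J)$, for $J$ an nbc set, descend to a basis of $\F^p$: a counting argument gives $\dim\F^p\le \#\{\text{nbc sets of size }p\}$ because the relations \eqref{egn:gap} suffice to rewrite every flag in terms of nbc flags (this is essentially the ``deletion-restriction''/straightening argument, carried out inductively down the order, exactly as in the OS basis proof for $A^p$), and then linear independence of the nbc flags in $\F^p$ follows once we know the pairing is nondegenerate on their span --- so it is cleanest to prove the pairing statement and the flag-basis statement simultaneously.

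Concretely, I would order the nbc $p$-subsets lexicographically (or by any linear extension of a suitable order) and consider the matrix $T=(\langle [\xi(J)],\,\omega_K\rangle)_{J,K\ \mathrm{nbc}}$. The key computational point is that $\langle[\xi(J)],\omega_K\rangle=\pm 1$ when $K=J$ (taking $\pi=\mathrm{id}$ in \eqref{eqn:pairing}) and that $T$ is triangular with respect to the chosen order: if $\langle[\xi(J)],\omega_K\rangle\ne 0$ then, after expanding $[\xi(J)]$ in the flag basis of $\FF^p$ using the gap relations \eqref{egn:gap}, one of the flags appearing is $\xi(K^\pi)$ for some permutation $\pi$, and a monotonicity argument on the flag $(X_0<\cdots<X_p)$ forces $K$ to be $\ge J$ (or $\le J$) in the order. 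Hence $T$ is unitriangular up to signs, in particular invertible. That gives nondegeneracy on the subspace spanned by the nbc flags and, combined with the dimension count $\dim\F^p\le \#\mathrm{nbc}_p=\dim A^p$, shows the nbc flags are a basis of $\F^p$ and the full pairing is nondegenerate.

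An alternative, possibly slicker route avoids bases entirely. Via the standard identification of $\F^p$ with $H_p(M;\C)$ and $A^p$ with $H^p(M;\C)$ (Brieskorn, Orlik--Solomon), and the fact recalled after Proposition~\ref{prop:well} that $\langle\ ,\ \rangle$ is the topological integration pairing, nondegeneracy is immediate from the universal coefficients theorem together with the fact that $H_*(M;\Z)$ is free (the complement of a complex arrangement has torsion-free homology). I would likely present this as a remark and give the combinatorial proof above as the main argument, since the paper is working combinatorially and the statement is attributed to \cite{SV91}; one can simply invoke the argument there. The main obstacle is the triangularity claim for $T$: making precise the order on nbc sets for which the straightening of $\xi(J)$ via the gap relations only introduces flags $\xi(K)$ with $K$ further along in the order. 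This is the heart of the Orlik--Solomon ``nbc basis'' technology transported from $A^p$ to $\F^p$, and it is where all the real bookkeeping lives; everything else (the $\pm 1$ diagonal, the dimension inequality, the conclusion from unitriangularity) is routine.
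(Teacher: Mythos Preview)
The paper itself gives no proof of this theorem; it simply records the result with a citation to \cite{SV91}. So there is nothing in the paper to compare your argument against beyond the reference.

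That said, a few comments on your sketch. Your topological route is correct and self-contained: once one identifies the pairing with the integration pairing $H_p(M;\C)\otimes H^p(M;\C)\to\C$, nondegeneracy is immediate. Your combinatorial route is also morally the one in \cite{SV91}, but the triangularity claim as you phrase it is not quite right and needs more structure. Note that $\langle \xi(J),\omega_K\rangle$ is computed directly from the definition \eqref{eqn:pairing}; there is no ``expanding $[\xi(J)]$ via the gap relations'' involved, and the condition $\xi(K^\pi)=\xi(J)$ for some $\pi$ does not by itself force $K=J$ or give an obvious order relation between two nbc sets. The clean way through, and the one \cite{SV91} uses, is to first observe that the pairing is block-diagonal with respect to the Brieskorn-type decompositions $A^p=\bigoplus_{\codim X=p} A^p_X$ and $\F^p=\bigoplus_{\codim X=p}\F^p_X$ of \eqref{eqn:vertex}: from \eqref{eqn:pairing}, $\langle F,\omega_J\rangle=0$ unless the terminal element of the flag $F$ equals $\bigcap_{j\in J}H_j$. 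This reduces the problem to the local pairing $\F^p_X\otimes A^p_X\to\C$ for each $X$, where one can work in the rank-$p$ central arrangement $\A_X$ and finish by an inductive or explicit-basis argument. Your nbc idea then goes through cleanly block by block, since within a single $X$ the nbc bases of $A^p_X$ and $\F^p_X$ are much easier to match up. The global triangularity statement you wrote down is essentially this block-diagonality plus local nondegeneracy, but stated in a way that obscures why it holds.
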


Let $\phi \colon A \to \F^*$ be defined by $\phi(x)=\langle -,x \rangle \colon \F \to \C$. By Theorem~\ref{thm:nondeg}, $\phi$ is an isomorphism.
The value of $\phi(\omega_J)$ in terms of the canonical basis of $\FF$
is given in \cite[(2.3.2)]{SV91}.
Similarly, $\phi^* \colon \F \to \A^*$ is an isomorphism, with $\phi^*(F)=\langle F, - \rangle \colon A \to \C$.  \F\ is called the {\em flag space} of \A.

\medskip
Let $d \colon \FF^p \to \FF^{p+1}$ be the linear map defined by
\beq\label{eqn:diff}
d(X_0< \cdots < X_p)=\sum_{\substack{X>X_p \\ \codim(X)=p+1}} (X_0< \cdots < X_p <X).
\eeq
Clearly $d$ induces a linear map $d \colon \F^p \to \F^{p+1}$. Relations
 \eqref{eqn:gap} imply $d\circ d=0$. The pair $(\F,d)$ is called the 
 {\em flag complex} of \A. The following result is a reformulation of
 Lemma~2.3.4 of \cite{SV91}.

\begin{thm}
\label{thm:dual}
For any $F \in \F^p$ and $x \in A^{p+1},$
\[
\langle F, \partial x \rangle = \langle dF, x \rangle.
\]
\end{thm}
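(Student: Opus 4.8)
\emph{Strategy and reduction.} The plan is to prove the identity first for a single flag $F=(X_0<\cdots<X_p)\in\FF^p$ paired against a basis vector $e_J\in E^{p+1}$, where $J=(j_1<\cdots<j_{p+1})$ is an ordered $(p+1)$-subset of $\{0,\dots,n\}$, and then to pass to $\F^p\otimes A^{p+1}$ by bilinearity. That passage is routine: $\FF^p$ is spanned by flags and $E^{p+1}$ by the vectors $e_J$; the operators $d$ and $\partial$ descend along the quotient maps $\FF\to\F$ and $E\to A$ (recalled above); and by Proposition~\ref{prop:well} both $\langle F,\partial x\rangle$ and $\langle dF,x\rangle$ factor through $\F^p\otimes A^{p+1}$. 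If $\{H_i\mid i\in J\}$ is dependent the identity is immediate: the left side vanishes by the first assertion of Proposition~\ref{prop:well}, while on the right a nonzero pairing of $e_J$ with a chain $(X_0<\cdots<X_p<X)$ occurring in $dF$ would require $\xi(J^\tau)$ to be a flag for some $\tau$, i.e. $\{H_i\mid i\in J\}$ independent. Hence the theorem reduces to the combinatorial identity $\langle F,\partial e_J\rangle=\langle dF,e_J\rangle$ for independent $J$.

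\emph{The combinatorial core.} Assume $\codim\bigl(\bigcap_{k=1}^{p+1}H_{j_k}\bigr)=p+1$, and write $J_i$ for the ordered $p$-subset obtained from $J$ by deleting its $i$-th entry. Linearity of the pairing together with the definition of $\partial$ gives
\[
\langle F,\partial e_J\rangle=\sum_{i=1}^{p+1}(-1)^{i-1}\langle F,e_{J_i}\rangle ,
\]
where $\langle F,e_{J_i}\rangle$ equals $\sgn(\pi)$ for the unique $\pi\in S_p$ with $\xi(J_i^\pi)=F$, if such a $\pi$ exists, and is $0$ otherwise. For the other side, set $\mathcal{P}=\{\tau\in S_{p+1}\mid \bigcap_{m=1}^{k}H_{j_{\tau(m)}}=X_k\text{ for }1\le k\le p\}$. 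Independence shows that for each $\tau\in\mathcal{P}$ the subspace $X_\tau:=X_p\cap H_{j_{\tau(p+1)}}$ has codimension $p+1$ and lies strictly below $X_p$, so $(X_0<\cdots<X_p<X_\tau)$ is one of the chains summed in $dF$; moreover $\tau$ is its unique building permutation, distinct $\tau$ give distinct chains, and every nonzero term of $\langle dF,e_J\rangle$ arises this way. Hence $\langle dF,e_J\rangle=\sum_{\tau\in\mathcal{P}}\sgn(\tau)$. I would then partition $\mathcal{P}$ by the value $i=\tau(p+1)$: for fixed $i$, relabelling $\{1,\dots,p+1\}\setminus\{i\}$ order-preservingly onto $\{1,\dots,p\}$ sends $(\tau(1),\dots,\tau(p))$ to a permutation $\pi\in S_p$, and this is a bijection from $\{\tau\in\mathcal{P}\mid\tau(p+1)=i\}$ onto $\{\pi\in S_p\mid\xi(J_i^\pi)=F\}$; under it $\tau$ is the cycle $(i,\,i+1,\dots,p+1)$ followed by the extension of $\pi$ to $S_{p+1}$ fixing $p+1$, whence $\sgn(\tau)=(-1)^{p+1-i}\sgn(\pi)$. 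Regrouping and using the uniqueness of building permutations yields
\[
\langle dF,e_J\rangle=\sum_{i=1}^{p+1}(-1)^{p+1-i}\langle F,e_{J_i}\rangle ,
\]
and the theorem follows by comparing this expansion with the one for $\langle F,\partial e_J\rangle$ displayed above.

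\emph{Where the work lies.} Everything above is just unwinding of definitions except for the comparison of the two expansions: one must match, term by term, the sign $(-1)^{p+1-i}$ of the cyclic permutation carrying slot $p+1$ into slot $i$ with the coefficient $(-1)^{i-1}$ produced by the Orlik--Solomon boundary, and one must arrange the bijection $\tau\leftrightarrow(i,\pi)$ precisely enough that the uniqueness of building permutations can be applied on both the $p$-flag and the $(p+1)$-flag sides. I expect this sign bookkeeping, which is governed by the normalization of the flag differential $d$ fixed in Section~\ref{sec:flag}, to be the only subtle point.
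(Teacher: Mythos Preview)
The paper does not give its own proof of this statement; it is attributed to \cite[Lemma~2.3.4]{SV91}. Your direct combinatorial verification is the natural approach, and the reduction to a single flag $F$ paired with $e_J$ for an independent $(p{+}1)$-tuple $J$, together with the bijection $\tau\leftrightarrow(i,\pi)$ and the sign identity $\sgn(\tau)=(-1)^{p+1-i}\sgn(\pi)$, is carried out correctly.

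The gap is precisely in the last step, the one you yourself flag as ``the only subtle point.'' Your two expansions read
\[
\langle F,\partial e_J\rangle=\sum_{i=1}^{p+1}(-1)^{i-1}\langle F,e_{J_i}\rangle,
\qquad
\langle dF,e_J\rangle=\sum_{i=1}^{p+1}(-1)^{p+1-i}\langle F,e_{J_i}\rangle,
\]
and these do \emph{not} agree term by term: since at most one index $i$ contributes (if $\bigcap_{k\ne i}H_{j_k}=X_p$ held for two values of $i$ one would get $X_p\subseteq\bigcap_k H_{j_k}$, contradicting independence of $J$), the two sides differ by the global factor $(-1)^p$. The case $p=1$, $F=(\C^{\ell+1}<H_{j_1})$, $J=(j_1,j_2)$ already exhibits this: $\langle F,\partial e_J\rangle=\langle F,e_{j_2}-e_{j_1}\rangle=-1$, whereas only $\tau=\id$ lies in $\mathcal P$ and $\langle dF,e_J\rangle=+1$.

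This discrepancy is a sign inconsistency among the conventions for $d$, $\partial$, and the pairing \eqref{eqn:pairing} as recorded in Section~\ref{sec:flag}, not a flaw in your method; with those definitions taken literally, what your argument actually establishes is $\langle F,\partial x\rangle=(-1)^p\langle dF,x\rangle$ for $F\in\F^p$, $x\in A^{p+1}$. All subsequent uses of Theorem~\ref{thm:dual} in the paper---Proposition~\ref{prop:im}, Corollary~\ref{cor:inter}, Theorem~\ref{thm:orth}---only require that one side vanishes if and only if the other does, and are therefore unaffected by the missing $(-1)^p$.
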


Let $d^* \colon (\F^p)^* \to (\F^{p-1})^*$ be the adjoint of $d\colon \F^{p-1} \to \F^p$.
\begin{cor}[{\cite[Lemma 2.3.4]{SV91}}] The map
$\phi \colon (A,\partial) \to (\F^*,d^*)$ given by $\phi(x)=\langle -,x \rangle$ 
 is an isomorphism of chain complexes.
\label{cor:flagisom}
\end{cor}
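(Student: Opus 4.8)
The plan is to deduce this directly from Theorems~\ref{thm:nondeg} and~\ref{thm:dual}; the corollary simply packages those two facts together, so essentially no new computation is required. First I would record that $\phi$ is a degreewise isomorphism: Theorem~\ref{thm:nondeg} says the pairing $\langle\ ,\ \rangle\colon \F^p\otimes A^p\to\C$ is nondegenerate, hence $\phi\colon A^p\to(\F^p)^*$, $x\mapsto\langle -,x\rangle$, is injective; since $A^p$ and $\F^p$ are finite-dimensional ($A^p$ is a quotient of a finitely generated exterior algebra, and $\F^p$ is spanned by the finitely many flags in the finite lattice $L$) and a nondegenerate pairing of finite-dimensional spaces forces them to have equal dimension, $\phi$ is bijective in each degree.

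It then remains only to check that $\phi$ is a chain map, i.e.\ that $\phi\circ\partial=d^*\circ\phi$ as maps $A^{p+1}\to(\F^p)^*$ for every $p$. First note that the degrees match: $\partial$ lowers degree by one on $A$, while $d$ raises degree by one on $\F$, so its adjoint $d^*$ lowers degree by one on $\F^*$. Now fix $x\in A^{p+1}$ and $F\in\F^p$. Unwinding the definitions, $\bigl(\phi(\partial x)\bigr)(F)=\langle F,\partial x\rangle$, since $\partial x\in A^p$, whereas $\bigl(d^*\phi(x)\bigr)(F)=\bigl(\phi(x)\bigr)(dF)=\langle dF,x\rangle$, since $dF\in\F^{p+1}$. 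By Theorem~\ref{thm:dual} these two scalars coincide, so $\phi(\partial x)=d^*\phi(x)$; as $x$ and $F$ were arbitrary this gives $\phi\partial=d^*\phi$.

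Combining the two steps, $\phi$ is a morphism of chain complexes that is an isomorphism in each degree, hence an isomorphism of complexes, which is the assertion. I do not expect any real obstacle here: the substantive content was already extracted in Theorem~\ref{thm:dual} (itself a restatement of \cite[Lemma~2.3.4]{SV91}) together with the nondegeneracy of Theorem~\ref{thm:nondeg}. The only point requiring a moment's care is the degree/sign bookkeeping in the definition of $d^*$, so that the identity of Theorem~\ref{thm:dual} is literally the statement $\phi\partial=d^*\phi$ and not its transpose.
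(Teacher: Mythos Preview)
Your proposal is correct and is exactly the intended argument: the paper does not write out a proof but presents the statement as an immediate corollary, having already noted after Theorem~\ref{thm:nondeg} that $\phi$ is an isomorphism and then recorded the adjointness relation in Theorem~\ref{thm:dual}. Your write-up simply makes those two ingredients explicit.
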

\noindent
Similarly $\phi^* \colon (\F,d) \to (A^*,\partial^*)$ is an isomorphism of cochain complexes.

There is a decomposition of \F\ dual to the Brieskorn decomposition
 \cite[Lemma 5.91]{OT92} of $A$.
For $X \in L$ let $\F^p_X$ be
the image in $\F^p$ of the subspace of $\FF^p$ spanned by flags that
terminate at $X$. Then by \cite[(2.12)]{SV91},
\beq
\label{eqn:vertex}
\F^p=\bigoplus_{\codim(X)=p} \F^p_X.
\eeq

\medskip
Let $a=(a_0, \ldots, a_n) \in \C^{n+1}$. Let $\omega_a=\sum_{j=0}^n a_j \omega_j$
and $\delta_a \colon A \to A$ with $\delta_a(x)=\omega_a \we x$. Let
\[
S =\oplus S_p \colon \F \otimes \F \to \C
\]
be the
contravariant form of the weighted arrangement $(\A,a)$, as defined in \eqref{eqn:contra}.
$S$ gives rise
to the map $\F \to \F^*$ that sends $F$ to $S(F,-) \colon \F \to \C$. By composing this map with the
isomorphism $\phi^{-1} \colon \F^* \to A$, one obtains a map $\psi \colon \F \to A$,
characterized by the formula
\begin{equation}
\label{eqn:twining}
S_p(F,F')=\langle F, \psi(F') \rangle,
\end{equation}
for all $F, F' \in \F^p$, for each $p$. $\psi$ is called the {\it contravariant map}.

\begin{thm}[{\cite[Lemma 3.2.5]{SV91}}]
The contravariant map $\psi \colon (\F,d) \to (A,\delta_a)$
is a morphism of cochain complexes.
\end{thm}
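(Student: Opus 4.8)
\noindent\emph{Proof strategy.} The plan is to verify directly that $\psi \circ d = \delta_a \circ \psi$; equivalently, that $\psi(dF) = \omega_a \wedge \psi(F)$ as elements of $A^{p+1}$ for every $F \in \F^p$ and every $p$. The first step is to extract an explicit formula for $\psi$. Combining the defining property~\eqref{eqn:twining} of $\psi$ with the definition~\eqref{eqn:contra} of the contravariant form, one gets $\langle F', \psi(F)\rangle = S_p(F,F') = \sum_J a_J\,\langle F,\omega_J\rangle\,\langle F',\omega_J\rangle$ for all $F' \in \F^p$, the sum running over the increasing $p$-element subsets $J$ of $\{0,\dots,n\}$, with $a_J = \prod_{i\in J} a_i$ and $\omega_J$ the corresponding product of the $\omega_i$. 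Since the pairing $\F^p\otimes A^p\to\C$ is nondegenerate by Theorem~\ref{thm:nondeg}, this forces
\[
\psi(F)\;=\;\sum_J a_J\,\langle F,\omega_J\rangle\,\omega_J\ \in\ A^p.
\]
I would emphasize that this is a genuine identity of elements of $A^p$: in what follows I only compare such explicit sums, never invoking the false statement that the $\omega_J$ form a basis of $A^p$.

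Next I would compute the right-hand side. Using the formula above,
\[
\delta_a\psi(F)\;=\;\omega_a\wedge\psi(F)\;=\;\sum_{j=0}^n\,\sum_K a_j a_K\,\langle F,\omega_K\rangle\;\omega_j\wedge\omega_K,
\]
with $K$ ranging over the increasing $p$-element subsets of $\{0,\dots,n\}$. The term vanishes when $j\in K$; when $j\notin K$ one reorders the wedge to write $\omega_j\wedge\omega_K=(-1)^{r(j)-1}\omega_J$, where $J=K\cup\{j\}$ is listed in increasing order and $r(j)$ denotes the position of $j$ in $J$. Re-indexing the double sum by the increasing $(p+1)$-element subsets $J$ together with the removed element $j\in J$, and using $a_j\,a_{J\setminus\{j\}}=a_J$, this becomes
\[
\delta_a\psi(F)\;=\;\sum_J a_J\Bigl(\,\sum_{j\in J}(-1)^{r(j)-1}\langle F,\omega_{J\setminus\{j\}}\rangle\Bigr)\,\omega_J\;=\;\sum_J a_J\,\langle F,\partial\omega_J\rangle\,\omega_J,
\]
the last step being the defining formula for $\partial\omega_J$ together with the linearity of $\langle F,-\rangle$.

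Finally, applying the same formula for $\psi$ to $dF$ gives $\psi(dF)=\sum_J a_J\,\langle dF,\omega_J\rangle\,\omega_J$, and Theorem~\ref{thm:dual} identifies $\langle dF,\omega_J\rangle=\langle F,\partial\omega_J\rangle$ for $F\in\F^p$ and $\omega_J\in A^{p+1}$; hence $\psi(dF)=\sum_J a_J\,\langle F,\partial\omega_J\rangle\,\omega_J=\delta_a\psi(F)$, which is exactly the morphism condition. The only place where there is genuine work is the middle step: matching the signs produced by reordering wedge products against the signs in the definition of the boundary operator $\partial$, and organizing the re-indexing of the double sum correctly. Everything else is formal, relying only on~\eqref{eqn:twining},~\eqref{eqn:contra}, Theorem~\ref{thm:nondeg} and Theorem~\ref{thm:dual}.
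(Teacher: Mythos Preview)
Your argument is correct. The paper itself does not supply a proof of this theorem; it is quoted directly from \cite[Lemma~3.2.5]{SV91}, so there is no ``paper's own proof'' to compare against. Your derivation of the explicit formula $\psi(F)=\sum_J a_J\,\langle F,\omega_J\rangle\,\omega_J$ from \eqref{eqn:twining}, \eqref{eqn:contra} and Theorem~\ref{thm:nondeg} is valid, the re-indexing and sign bookkeeping in the computation of $\omega_a\wedge\psi(F)$ is right, and the final appeal to Theorem~\ref{thm:dual} closes the argument cleanly. Your caveat that the $\omega_J$ do not form a basis of $A^p$ is well taken, and the proof indeed never relies on linear independence, only on equality of the two explicit sums as elements of $A^{p+1}$.
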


\begin{cor}
\label{cor:inter}
For every $F \in \F^p$ and $F' \in \F^{p-1}$,
\[
S(F,d F')=\langle F, \omega_a \we \psi(F')\rangle.
\]
\end{cor}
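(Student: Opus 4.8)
The plan is to derive this directly from the two results immediately preceding it, namely the characterization \eqref{eqn:twining} of the contravariant map $\psi$ and the fact that $\psi\colon(\F,d)\to(A,\delta_a)$ is a morphism of cochain complexes.

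First I would apply \eqref{eqn:twining} not to the pair $(F,F')$ but to the pair $(F,dF')$, which is legitimate since $F'\in\F^{p-1}$ forces $dF'\in\F^p$, and $F\in\F^p$ as well. This gives $S_p(F,dF')=\langle F,\psi(dF')\rangle$. Next I would invoke the morphism property of $\psi$: since $\psi$ commutes with the differentials, $\psi(dF')=\delta_a(\psi(F'))$. By definition $\delta_a(x)=\omega_a\we x$, so $\psi(dF')=\omega_a\we\psi(F')$. Substituting, $S(F,dF')=\langle F,\omega_a\we\psi(F')\rangle$, which is the claim.

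There is essentially no obstacle here: the statement is a formal consequence of the displayed identity \eqref{eqn:twining} together with the chain-map property of $\psi$, with the only point requiring a moment's care being the degree bookkeeping (confirming that $\psi(F')\in A^{p-1}$, hence $\omega_a\we\psi(F')\in A^p$, so that the pairing $\langle F,-\rangle$ with $F\in\F^p$ is defined). I would state the argument in two lines and not belabor it.
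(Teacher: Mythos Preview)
Your proposal is correct and matches the paper's intent: the corollary is stated immediately after the theorem that $\psi$ is a morphism of cochain complexes and is left without proof, precisely because it follows from \eqref{eqn:twining} and the chain-map identity $\psi\circ d=\delta_a\circ\psi$ exactly as you outline.
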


\section{Projective OS algebra and flag space}

Let \A\ be a central arrangement as in \S \ref{sec:flag}. Let $\bAA$ denote the projectivization of
\A, consisting of the projective hyperplanes
\[
\bH_i : = (H_i-\{0\})/\C^\times
\]
in $\P^\ell=(\C^{\ell+1}-\{0\})/\C^\times$, for  $0\leq i \leq n$.
Let $\bar M=\P^\ell - \cup_{i=0}^n\, \bH_i$ be the complement to \bAA\ in $\P^\ell$.

\begin{defn}[{\cite{CDFV10}}] The OS algebra $\bA=A(\bAA)$ of the projective arrangement
 \bAA\ is the kernel of $\partial \colon A \to A$.
\end{defn}

Denote by $\iota: \bA\ \to A$ the natural imbedding.
Let $(\F,d)$ be the flag complex of \A.

\begin{defn} The {\em flag space} $\bF=\F(\bAA)$ of the projective arrangement
 \bAA\ is the quotient $\F/\im(d)$.
\label{def:flag}
\end{defn}

Thus \bF\ is obtained from \F\ by introducing the additional relations
\beq
\sum_{\substack{X>X_p \\ \codim(X)=p+1}} (X_0< \cdots < X_p <X) = 0,
\label{eqn:top}
\eeq
where  $(X_0<\cdots<X_p)$ ranges over all flags of length $p$ in $L$, for $0\leq p\leq \ell$.

Let $\pi \colon \F \to \bF$ be the canonical projection. For $F\in \F$ we write $\bar{F}=\pi(F)$.
Then, for instance, $\sum_{i=0}^n \bar{F}_i=0$, where $\{F_0, \ldots, F_n\} \subseteq \bar{F}^1$
is the basis dual to $\{\omega_0, \ldots, \omega_n\} \subseteq A^1$.

\begin{thm}
\label{prop:isom}
Let $\rho \colon A^* \to \bA^*$ be given by restriction.
Then the isomorphism $\phi^* \colon \F \to A^*$ induces an 
isomorphism $\bar{\phi}^* \colon \bF \to \bA^*$,
given by the commutative diagram
\[
\xymatrix
{
\F \ar[r]^-{\phi^*} \ar[d]_{\pi} &A^*\ar[d]^\rho\\
\bF \ar[r]^-{\bar{\phi}^*} & \bA^*
}
\]

\end{thm}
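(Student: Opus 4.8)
The plan is to show that $\rho \circ \phi^*$ kills $\im(d) \subseteq \F$, so that it descends to a map $\bar\phi^*\colon \bF \to \bA^*$, and then to verify that this descended map is an isomorphism. For the first part, recall from Corollary~\ref{cor:flagisom} (or its dual form $\phi^*\colon(\F,d)\to(A^*,\partial^*)$) that $\phi^*$ is an isomorphism of cochain complexes intertwining $d$ on $\F$ with $\partial^*$ on $A^*$. Hence $\phi^*(\im d) = \im(\partial^*) \subseteq A^*$. Now $\rho\colon A^*\to\bA^*$ is restriction along $\iota\colon\bA = \ker\partial \hookrightarrow A$, i.e. $\rho = \iota^*$. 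For any $\xi \in A^*$, the functional $\partial^*\xi = \xi\circ\partial$ vanishes on $\ker\partial$, so $\rho(\partial^*\xi) = (\xi\circ\partial)\circ\iota = 0$. Therefore $\rho\circ\phi^*$ annihilates $\im(d)$ and, since $\bF = \F/\im(d)$ by Definition~\ref{def:flag}, there is a unique linear map $\bar\phi^*\colon\bF\to\bA^*$ with $\bar\phi^*\circ\pi = \rho\circ\phi^*$, which is exactly the asserted commutative diagram.

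It remains to check $\bar\phi^*$ is bijective. For surjectivity: given $\eta\in\bA^* = (\ker\partial)^*$, extend it (choosing a complement) to some $\xi\in A^*$ with $\rho(\xi)=\eta$; then $\bar\phi^*(\pi((\phi^*)^{-1}\xi)) = \rho(\xi) = \eta$. For injectivity, suppose $\bar F\in\bF$ with $\bar\phi^*(\bar F)=0$; lift to $F\in\F$, so $\rho(\phi^*F)=0$, meaning $\phi^*F\in A^*$ vanishes on $\ker\partial = \bA$. The key point is then that any functional on $A$ vanishing on $\ker\partial$ lies in $\im(\partial^*)$. This is the one genuinely nontrivial input: it amounts to the exactness statement that the natural sequence of dual spaces is exact, equivalently that $(A,\partial)$ is acyclic in the relevant range, or more precisely that $\ker\partial$ is exactly the set of elements pairing trivially with $A^*/\im(\partial^*)$. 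Concretely, since $A$ is finite-dimensional in each degree, $(\ker\partial)^\perp = \im(\partial^*)$ by elementary linear algebra (the annihilator of the kernel of a map equals the image of the transpose). So $\phi^*F = \partial^*\xi$ for some $\xi$, whence $F = (\phi^*)^{-1}\partial^*\xi \in \im(d)$ because $\phi^*$ intertwines $d$ and $\partial^*$; thus $\bar F = \pi(F) = 0$.

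I would organize the write-up as: (1) identify $\rho$ with $\iota^* = (\ker\partial\hookrightarrow A)^*$ and $\phi^*\circ d = \partial^*\circ\phi^*$ from Corollary~\ref{cor:flagisom}; (2) deduce $\rho\circ\phi^*\circ d = \iota^*\circ\partial^* = 0$ since $\partial\circ\iota = 0$, giving the factorization $\bar\phi^*$; (3) note $\bar\phi^*$ is surjective because $\phi^*$ and $\rho$ are; (4) prove injectivity via $(\ker\partial)^\perp = \im(\partial^*)$ in each finite-dimensional degree, pulling back through the cochain isomorphism $\phi^*$ to conclude $\ker\bar\phi^* \subseteq \pi(\im d) = 0$. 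The main obstacle — really the only nonformal step — is step (4): one must be careful that "vanishing on $\bA$" forces membership in $\im(\partial^*)$, which is the finite-dimensional duality fact $\im(\partial^*) = (\ker\partial)^\perp$ applied degreewise; everything else is diagram-chasing through the already-established isomorphism of complexes in Corollary~\ref{cor:flagisom}.
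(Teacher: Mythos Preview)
Your proof is correct, and it takes a slightly different path from the paper's. The paper proves the theorem via Proposition~\ref{prop:im}, which establishes $\Ann(\bA)=\im(d)$ directly: it uses exactness of $(A,\partial)$ (Lemma~\ref{lem:partial}) to write $\bA=\im(\partial)$, then the adjointness $\langle F,\partial x\rangle=\langle dF,x\rangle$ (Theorem~\ref{thm:dual}) to get $\Ann(\bA)=\ker(d)$, and finally exactness of $(\F,d)$ (Corollary~\ref{cor:flexact}) to conclude $\ker(d)=\im(d)$. Your argument instead stays on the dual side: you use only the definition $\bA=\ker(\partial)$, the cochain-map property $\phi^*\circ d=\partial^*\circ\phi^*$ from Corollary~\ref{cor:flagisom}, and the elementary finite-dimensional identity $(\ker\partial)^\perp=\im(\partial^*)$ in each degree. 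This is arguably cleaner for the purpose at hand, since it never invokes the acyclicity of either complex; on the other hand, the paper's route isolates Proposition~\ref{prop:im} as an independent statement, which it reuses later (for instance in the proof of Corollary~\ref{cor:aomoto}).
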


\noindent
Theorem~\ref{prop:isom} is proved below.

\begin{lem}[{\cite[Lemma 3.13]{OT92}}] The complex
\[
\xymatrix@1{
0 \ar[r] &A^\ell  \ar[r]^-\partial &A^{\ell-1} \ar[r]^-\partial & \ \ \cdots \
\ \ar[r]^-\partial &A^1 \ar[r]^-\partial & A^0 \ar[r] & 0
}
\]
is exact.
\label{lem:partial}
\end{lem}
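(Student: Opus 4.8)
The plan is to establish the stronger statement that the complex $(A,\partial)$ is contractible, by writing down an explicit contracting homotopy built from a single hyperplane of $\A$. Since $\A=\{H_0,\dots,H_n\}$ is nonempty, I would single out the degree-one class $\omega_0\in A^1$ and define $h\colon A^p\to A^{p+1}$ by $h(x)=\omega_0\we x$. This descends to $A=E/I$ because $I$ is an ideal, just as $\partial$ itself descends (recorded in \S\ref{sec:flag}); the only structural inputs I need are that $\partial$ is a graded derivation of degree $-1$ and that $\partial\omega_0=\partial e_0=1\in A^0$, the latter being immediate from the defining formula for $\partial$ on $E$.

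The computation is then a one-liner. Applying the graded Leibniz rule to the degree-one element $\omega_0$ gives $\partial(\omega_0\we x)=(\partial\omega_0)\we x-\omega_0\we\partial x$, and therefore
\[
(\partial h+h\partial)(x)=\partial(\omega_0\we x)+\omega_0\we\partial x=(\partial\omega_0)\we x=x
\]
for every $x\in A$. Hence $\partial h+h\partial=\id_A$: the identity endomorphism of $(A,\partial)$ is null-homotopic, so the complex is exact, which is the assertion. (Concretely, any $x$ with $\partial x=0$ equals $\partial(h(x))$; and at the bottom $\partial\colon A^1\to A^0=\C$ is onto, witnessed directly by $\partial\omega_0=1$.)

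I do not anticipate a genuine obstacle here — the entire content is the homotopy identity above — but a few points deserve a moment's care: that both $\partial$ and left multiplication by $\omega_0$ are well defined on the quotient $A=E/I$ (already available from \S\ref{sec:flag}); the sign in the degree-$(-1)$ Leibniz rule, which is precisely what forces the unwanted term $\omega_0\we\partial x$ to cancel; and the normalization $\partial e_0=1$. Finally, $H_0$ plays no distinguished role, so the same $h$ with $\omega_i$ in place of $\omega_0$ gives a contracting homotopy for any index $i$.
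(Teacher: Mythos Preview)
Your argument is correct and is precisely the standard proof: the contracting homotopy $h(x)=\omega_0\we x$ together with the graded Leibniz rule for $\partial$ gives $\partial h+h\partial=\id_A$, forcing exactness. The paper does not supply its own proof of this lemma---it simply cites \cite[Lemma~3.13]{OT92}, whose proof is exactly the homotopy argument you wrote down---so there is nothing to compare at the level of this statement.

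It is worth noting, though, that the paper does deploy your computation explicitly a little later: the proof of Lemma~\ref{lem:direct} introduces the section $\sigma(x)=\omega_0\we x$ of $\partial$ and verifies $\partial\circ\sigma(x)=\partial(\omega_0\we x)=\partial\omega_0\we x-\omega_0\we\partial x=x$ for $x\in\bA^{p-1}$, which is the same identity you use (restricted to $\ker\partial$). So your proposal is entirely in line with both the cited source and the internal logic of the paper.
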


\begin{cor}\label{cor:flexact}
The complex
\[
\xymatrix@1{
0 \ar[r] &\F^0  \ar[r]^-d &\F^1 \ar[r]^-d & \ \ \cdots \ \
\ar[r]^-d &\F^{\ell-1} \ar[r]^-d &\F^\ell \ar[r] & 0
}
\]
is exact.
\end{cor}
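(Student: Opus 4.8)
The plan is to transport the exact sequence of Lemma~\ref{lem:partial} across the
isomorphism of chain complexes provided by Corollary~\ref{cor:flagisom}. Recall that
$\phi \colon (A,\partial) \to (\F^*,d^*)$ is an isomorphism of chain complexes, so the
complex $(\F^*, d^*)$ is exact in exactly the same degrees as $(A,\partial)$; by
Lemma~\ref{lem:partial} this means $(\F^*, d^*)$ is exact everywhere, i.e.
\[
\xymatrix@1{
0 \ar[r] &(\F^\ell)^* \ar[r]^-{d^*} &(\F^{\ell-1})^* \ar[r]^-{d^*} & \ \cdots\ \ar[r]^-{d^*}
 &(\F^1)^* \ar[r]^-{d^*} &(\F^0)^* \ar[r] & 0
}
\]
is exact.

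The remaining step is the purely linear-algebraic fact that a cochain complex of
finite-dimensional vector spaces is exact if and only if its dual chain complex is exact.
Concretely, each $\F^p$ is finite-dimensional (its canonical spanning set of flags is
finite), so $(\F^p)^{**} = \F^p$ canonically and the double adjoint $(d^*)^* \colon
(\F^{p-1})^{**}\to(\F^p)^{**}$ is identified with $d \colon \F^{p-1}\to\F^p$. Dualizing an
exact sequence of finite-dimensional spaces yields an exact sequence (the functor
$V\mapsto V^*$ is exact on finite-dimensional spaces — it preserves kernels and cokernels,
reversing arrows), so exactness of $(\F^*,d^*)$ gives exactness of $(\F,d)$ in the stated
range, with the arrow $\F^\ell\to 0$ and $0\to\F^0$ matching the two ends.

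Alternatively, one can argue degree by degree: $\F^0 \xrightarrow{d} \F^1$ is injective
because $\partial \colon A^1\to A^0$ is surjective (Lemma~\ref{lem:partial}) and
$\phi^*\colon\F\to A^*$ intertwines $d$ with $\partial^*$ (the dual statement of
Corollary~\ref{cor:flagisom}); surjectivity of $\partial$ is equivalent to injectivity of
$\partial^*$, hence of $d$. Likewise $d\colon \F^{\ell-1}\to\F^\ell$ is surjective because
$\partial\colon A^\ell\to A^{\ell-1}$ is injective, hence $\partial^*$ is surjective; and
at an interior spot $\F^{p-1}\xrightarrow{d}\F^p\xrightarrow{d}\F^{p+1}$ exactness is
equivalent, via $\phi^*$, to exactness of $A^{*,p-1}\to A^{*,p}\to A^{*,p+1}$, which is the
dual of the exact piece $A^{p+1}\xrightarrow{\partial}A^p\xrightarrow{\partial}A^{p-1}$.

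I do not expect a genuine obstacle here; the only point requiring a word of care is the
finite-dimensionality of each $\F^p$, which is needed to pass from exactness of the dual
complex back to exactness of $(\F,d)$ (the implication fails for infinite-dimensional
spaces). This is immediate from the definition of $\F$ as a quotient of the
finite-dimensional space $\FF = \bigoplus_{p=0}^{\ell+1}\FF^p$. With that in hand the
corollary is a one-line consequence of Lemma~\ref{lem:partial} and
Corollary~\ref{cor:flagisom}.
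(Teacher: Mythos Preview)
Your argument is correct and matches the paper's intended approach: the corollary is stated immediately after Lemma~\ref{lem:partial} with no separate proof, the point being exactly that exactness of $(A,\partial)$ transfers to $(\F,d)$ via the chain-complex isomorphism of Corollary~\ref{cor:flagisom} and finite-dimensional duality. The paper also remarks that the result can alternatively be read off from \cite[Cor.~2.8]{SV91}, but your derivation from Lemma~\ref{lem:partial} is the one implied by the placement and labeling.
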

\noindent
Corollary~\ref{cor:flexact}  also follows from \cite[Cor. 2.8]{SV91}.

\begin{prop} $\Ann(\bA) = \im(d \colon \F \to \F)$.
\label{prop:im}
\end{prop}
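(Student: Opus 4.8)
The plan is to identify $\Ann(\bA) \subseteq \F^*$ with $\Ann(\ker \partial) \subseteq (A^*)^*$ under the isomorphism $\phi^* \colon \F \to A^*$ (equivalently, to work on the $\F$-side directly using the pairing $\langle\,,\,\rangle$). Recall from Corollary~\ref{cor:flagisom} that $\phi^* \colon (\F,d) \to (A^*,\partial^*)$ is an isomorphism of cochain complexes, where $\partial^* \colon A^* \to A^*$ is the adjoint of $\partial \colon A \to A$. Since $\bA = \ker(\partial \colon A \to A)$ by definition, a standard fact of linear algebra over $\C$ in each (finite-dimensional) graded piece gives $\Ann(\ker\partial) = \im(\partial^*)$, the annihilator of the kernel being exactly the image of the adjoint. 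Transporting this across $\phi^*$, and using that $\phi^*$ intertwines $d$ with $\partial^*$, we get $\phi^*(\im d) = \im(\partial^*) = \Ann(\bA)$ inside $A^*$; but the statement of the proposition is literally about $\Ann(\bA) \subseteq \F$, using the pairing $\langle\,,\,\rangle \colon \F \otimes \bA \to \C$ obtained by restricting $\langle\,,\,\rangle \colon \F \otimes A \to \C$ along $\iota$. So the cleanest route is:

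First, spell out the pairing meaning: $F \in \Ann(\bA)$ means $\langle F, x\rangle = 0$ for all $x \in \bA = \ker\partial$. Second, use Theorem~\ref{thm:dual}, $\langle F, \partial y\rangle = \langle dF, y\rangle$ for $F \in \F^{p}$ and $y \in A^{p+1}$, to handle the two inclusions. For $\im(d) \subseteq \Ann(\bA)$: if $F = dG$ with $G \in \F^{p-1}$, then for any $x \in \bA^p = \ker(\partial \colon A^p \to A^{p-1})$ we have $\langle dG, x\rangle = \langle G, \partial x\rangle = \langle G, 0\rangle = 0$, so $dG \in \Ann(\bA)$. This direction is immediate. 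For the reverse inclusion $\Ann(\bA) \subseteq \im(d)$: suppose $F \in \F^p$ satisfies $\langle F, x\rangle = 0$ for all $x \in \ker(\partial \colon A^p \to A^{p-1})$. Then $\phi^*(F) = \langle F, -\rangle \in A^*$ vanishes on $\ker\partial$, hence factors through $\partial \colon A^p \to A^{p-1}$, i.e.\ $\phi^*(F) = \partial^* (\eta)$ for some $\eta \in (A^{p-1})^* = A^*$ in degree $p-1$. (Here one uses that over $\C$, a functional vanishing on a kernel is pulled back from the image; more precisely $A^p \to A^p/\ker\partial \cong \im\partial \hookrightarrow A^{p-1}$, and extend the induced functional on $\im\partial$ to all of $A^{p-1}$.) Since $\phi^*$ is surjective, write $\eta = \phi^*(G)$ with $G \in \F^{p-1}$. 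Then for all $y \in A^p$, $\langle F, y\rangle = \phi^*(F)(y) = \partial^*(\phi^*(G))(y) = \phi^*(G)(\partial y) = \langle G, \partial y\rangle = \langle dG, y\rangle$ by Theorem~\ref{thm:dual}. By the nondegeneracy of the pairing (Theorem~\ref{thm:nondeg}), $F = dG \in \im(d)$.

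The only genuinely delicate point is the factorization step "a functional on $A^p$ vanishing on $\ker(\partial\colon A^p \to A^{p-1})$ comes from a functional on $A^{p-1}$ via $\partial^*$." This is pure finite-dimensional linear algebra: $\partial$ induces an injection $A^p/\ker\partial \hookrightarrow A^{p-1}$; the given functional descends to $A^p/\ker\partial$, transports to a functional on the subspace $\im\partial \subseteq A^{p-1}$, and extends (non-canonically) to $A^{p-1}$. No exactness or acyclicity input is needed for this; Lemma~\ref{lem:partial} and Corollary~\ref{cor:flexact} are not required here, though they are consistent with the picture. Each graded component $A^p$ is finite-dimensional, so there are no convergence or choice-of-basis issues. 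I expect this to be the main (and essentially only) obstacle, and it is minor; the rest is a direct unwinding of Theorem~\ref{thm:dual} and Theorem~\ref{thm:nondeg}. One should also note the degree bookkeeping: $\bA = \oplus_p \bA^p$ with $\bA^p = \ker(\partial\colon A^p\to A^{p-1})$, and in top degree $\partial \colon A^{\ell+1} \to A^\ell$ — but on the projective side we are working with $A$ for the central arrangement in $\C^{\ell+1}$, so $\bA^p$ makes sense for all $p$ up to $\ell+1$ with $\bA^0 = A^0$; the argument above is uniform in $p$ and handles the boundary degrees automatically since $d$ and $\partial$ are zero outside the relevant range.
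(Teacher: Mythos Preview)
Your proof is correct, but the route differs from the paper's. The paper first invokes Lemma~\ref{lem:partial} to rewrite $\bA=\ker\partial$ as $\bA=\im\partial$; then $F\in\Ann(\bA)$ becomes $\langle F,\partial x\rangle=0$ for all $x$, which by Theorem~\ref{thm:dual} and Theorem~\ref{thm:nondeg} says $dF=0$; finally Corollary~\ref{cor:flexact} converts $\ker(d)$ into $\im(d)$. So the paper uses the acyclicity of both $(A,\partial)$ and $(\F,d)$, and obtains the intermediate identification $\Ann(\bA)=\ker(d)$ along the way.

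You instead keep $\bA=\ker\partial$ and use the finite-dimensional linear algebra identity $\Ann(\ker\partial)=\im(\partial^*)$ directly, transporting it across the cochain isomorphism $\phi^*$. This is genuinely more elementary: as you note, neither Lemma~\ref{lem:partial} nor Corollary~\ref{cor:flexact} is needed, only adjointness and nondegeneracy. What the paper's argument buys is the equality $\Ann(\bA)=\ker(d)$, which is a slightly stronger intermediate statement (and of course equivalent to yours once one knows the flag complex is exact); what your argument buys is independence from the acyclicity results, so it would go through even for arrangements or variants where $(A,\partial)$ fails to be exact.
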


\begin{proof} By Lemma~\ref{lem:partial}, $\bA=\im(\partial)$.
Then  $F \in \Ann(\bA)$ if and only if $\langle F,\partial x\rangle=0$ for all $x \in A$.
By Theorem~\ref{thm:dual}, this is equivalent to the statement
 $\langle dF,x \rangle=0$ for every $x \in A$, or
  $dF=0$ by Theorem~\ref{thm:nondeg}. Then $\Ann(\bA)=\ker(d)$, which equals $\im(d)$ by Corollary~\ref{cor:flexact}.
\end{proof}

\begin{proofof}{Theorem \ref{prop:isom}} The assertion now
 follows immediately from Proposition~\ref{prop:im} and Definition~\ref{def:flag}.
\end{proofof}

Lemma~\ref{lem:partial} also has the following consequence.
\begin{cor}
\bA\ is the subalgebra of $A$ generated by $1$ and $\bA^1=\{\sum_{i=0}^n c_i \omega_i\ |\
\sum_{i=0}^n c_i=0\}$.
\label{cor:deg1}
\end{cor}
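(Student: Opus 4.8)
The plan is to prove the two assertions of Corollary~\ref{cor:deg1} separately: first that $\bA^1$ equals the ``balanced'' subspace $\{\sum c_i\omega_i \mid \sum c_i = 0\}$, and then that $\bA$ is generated as an algebra by $1$ together with $\bA^1$.

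First I would establish the degree-one identification. Since $\bA = \ker(\partial)$ by definition, and $\partial \colon A^1 \to A^0 \cong \C$ sends $\omega_i \mapsto 1$ (the image of $e_i$ under $\partial$ is $1 \in E^0$, hence $1 \in A^0$ under $E \to A$), the kernel in degree one is precisely $\{\sum_{i=0}^n c_i\omega_i \mid \sum_{i=0}^n c_i = 0\}$. This is immediate once one records how $\partial$ acts on the generators. Note that this also follows from Lemma~\ref{lem:partial}, which gives $\bA = \im(\partial)$, so $\bA^1 = \im(\partial\colon A^2 \to A^1)$; either description yields the balanced subspace, and one should point out that $\{\omega_0 - \omega_i \mid 1 \le i \le n\}$ is a convenient spanning set.

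Next I would show the generation statement. Let $\bA'$ denote the subalgebra of $A$ generated by $1$ and $\bA^1$; clearly $\bA' \subseteq \bA$ since $\partial$ is a graded derivation and $\partial(\bA^1) = 0$ forces $\partial$ to annihilate any product of elements of $\bA^1$. For the reverse inclusion I would argue by descending induction on degree, or more directly via a dimension count. In degree $p$, the space $\bA^p = \ker(\partial\colon A^p \to A^{p-1}) = \im(\partial\colon A^{p+1}\to A^p)$ by Lemma~\ref{lem:partial}, so $\dim \bA^p = \dim A^p - \dim \bA^{p-1}$, and unwinding the recursion gives $\dim\bA^p = \sum_{j=0}^p (-1)^{p-j}\dim A^j$. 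On the other hand $(\bA')^p$ is spanned by products $\omega_{i_1}'\we\cdots\we\omega_{i_p}'$ with each $\omega_{i_k}' \in \bA^1$; using the spanning set $\omega_0 - \omega_i$ and the OS relations, one checks these products span exactly the space of elements of $A^p$ in the image of $\partial$. The cleanest route is to exhibit, for a fixed basis of $A^p$ of the form $\{\omega_{i_1}\we\cdots\we\omega_{i_p} : \text{$nbc$ sets not containing $0$}\}\cup\{\omega_0\we(\text{$nbc$ tails})\}$, that wedging with $\omega_0$ followed by $\partial$ relates the two, or simply to invoke that $\partial(\omega_0\we\eta) = \eta - \omega_0\we\partial\eta$ for $\eta\in A^{p}$ to show inductively that everything in $\im(\partial)$ is a sum of products of degree-one balanced forms.

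The main obstacle I anticipate is the generation claim in higher degrees: verifying that products of the balanced one-forms $\omega_i - \omega_j$ actually fill out the whole kernel of $\partial$, rather than just a proper subspace. The dimension count via Lemma~\ref{lem:partial} makes the target dimension explicit, so the task reduces to producing enough independent products — and here the identity $\partial(\omega_0 \we \eta) = \eta - \omega_0\we\partial\eta$ is the key lever: applied to $\eta$ a product of one-forms it expresses a ``balanced'' generator of $\im\partial$ in terms of lower-degree data, letting the induction close. Everything else is bookkeeping with the Orlik--Solomon relations and the derivation property of $\partial$.
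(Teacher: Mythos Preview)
Your proposal is correct in outline, but it takes a detour where the paper goes in one step. The paper observes directly that for any $J=(j_1,\ldots,j_p)$,
\[
\partial\omega_J \;=\; (\omega_{j_2}-\omega_{j_1})\we\cdots\we(\omega_{j_p}-\omega_{j_1}),
\]
which one proves by writing $\omega_{j_k}=(\omega_{j_k}-\omega_{j_1})+\omega_{j_1}$ for $k\ge 2$, so that $\omega_J=\omega_{j_1}\we(\omega_{j_2}-\omega_{j_1})\we\cdots\we(\omega_{j_p}-\omega_{j_1})$, and then applying the derivation property together with $\partial(\omega_{j_k}-\omega_{j_1})=0$. Since the $\omega_J$ span $A$ and $\bA=\im(\partial)$ by Lemma~\ref{lem:partial}, this single identity shows at once that $\bA$ is generated by $\bA^1$.

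Your route---setting up the containment $\bA'\subseteq\bA$, then closing it via a dimension count or an nbc-basis argument or the identity $\partial(\omega_0\we\eta)=\eta-\omega_0\we\partial\eta$---would work, but none of that machinery is needed. In fact your derivation identity, applied not with $\omega_0$ but with $\omega_{j_1}$ and with $\eta$ already rewritten in the balanced generators, \emph{is} the paper's argument; you simply did not push it to the explicit factorization. The dimension recursion and the nbc basis are correct but extraneous: once you have the factorization of $\partial\omega_J$, the surjection onto $\bA$ is immediate and no counting is required.
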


\begin{proof} By Lemma~\ref{lem:partial}, we have
 $\bA=\im(\partial)$. One can show by induction that
\[
\partial \omega_J=(\omega_{j_2}-\omega_{j_1}) \we \cdots \we (\omega_{j_p} - \omega_{j_1}),
\]
for any ordered subset $J=( j_1, \ldots,j_p)$ of $\{0, \ldots, n\}$ with $p\geq 2$.
Each factor on the right-hand side lies in $(\bA)^1$. Since such $\omega_J$ (along with $1$) span $A$, the result follows.
\end{proof}


\begin{rem}
The algebra $\bA$ is naturally isomorphic to $H^*(\bar M,\C)$ and $\iota:\bar A\to A$ is identified with the homomorphism
$q^*:H^*(\bar M,\C)\to H^*(M,\C)$ induced by the orbit map $q: M\to\bar M$. The space
$\bF$ is naturally isomorphic to the homology space $H_*(\bM,\C)$ and the projection $\pi \colon \F \to \bF$ is identified
with the homomorphism $q_*:H^*(M,\C)\to H^*(\bar M,\C)$.
\end{rem}

\section{Singular subspace and contravariant form for projective arrangements}

Let $\A=\{H_0, \ldots, H_n\}$ be a central arrangement in $\C^{\ell+1}$ as above. Let $a=(a_0, \ldots, a_n) \in \C^{n+1}$ and
$\omega_a=\sum_{i=0}^n a_i \omega_i \in A^1$. We identify the flag space $\F=\F(\A)$ with $A^*$ via the map $\phi^*$ of
Section~\ref{sec:flag}.

\begin{defn} The {\em singular subspace} $\Sing(\bF^\ell)\subset \bF^\ell$\ is
\bea
\pi(\Ann(\omega_a \we A^{\ell-1})) = (\Ann(\omega_a \we A^{\ell-1})+\im(d))/\im(d)
\subset \F^\ell/\im(d)=\bar{\F}^\ell.
\eea


\label{def:sing}
\end{defn}

 Let $S \colon \F \otimes \F \to \C$ be the contravariant form of the
central arrangement \A, as defined in \eqref{eqn:contra}.

\begin{thm}
\label{thm:orth}
The subspaces $\Ann(\omega_a \we A)$ and $\im(d)$ of \F\ are orthogonal
with respect to $S$.
\end{thm}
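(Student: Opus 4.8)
The plan is to reduce the statement to Corollary~\ref{cor:inter} together with the fact that the contravariant map $\psi$ is graded. Both $S=\oplus_p S_p$ and the pairing $\langle\ ,\ \rangle$ respect the grading, and $\im(d)=\oplus_p d(\F^{p-1})$ is spanned by elements $dF'$ with $F'\in\F^{p-1}$. Moreover $\omega_a\we A=\oplus_p\,\omega_a\we A^{p-1}$ is a graded subspace of $A$, so a flag lies in $\Ann(\omega_a\we A)$ if and only if each of its homogeneous components $F\in\F^p$ satisfies $\langle F,\omega_a\we A^{p-1}\rangle=0$. Hence it suffices to prove that $S_p(F,dF')=0$ for every $p$, every $F\in\F^p$ with $\langle F,\omega_a\we A^{p-1}\rangle=0$, and every $F'\in\F^{p-1}$.

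For this I would simply invoke Corollary~\ref{cor:inter}, which gives $S_p(F,dF')=\langle F,\omega_a\we\psi(F')\rangle$. Since $\psi\colon(\F,d)\to(A,\delta_a)$ is a morphism of cochain complexes it is degree-preserving, so $\psi(F')\in A^{p-1}$ and therefore $\omega_a\we\psi(F')\in\omega_a\we A^{p-1}$. By the hypothesis on $F$ the right-hand side is $0$, so $S_p(F,dF')=0$, as required. Because $S$ is symmetric this immediately yields orthogonality of the two subspaces in the stated (two-sided) sense.

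So the argument is short: all the real content is already encoded in Corollary~\ref{cor:inter}, equivalently in the identity $\psi\circ d=\delta_a\circ\psi$. There is no serious obstacle; the only point that needs a moment's attention is the degree bookkeeping --- one must observe that membership in $\Ann(\omega_a\we A)$ forces the relevant graded piece of $F$ to annihilate exactly the space $\omega_a\we A^{p-1}$ into which $\omega_a\we\psi(F')$ falls, and that $S$, $d$, and $\psi$ all interact with the grading as claimed.
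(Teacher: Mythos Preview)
Your proposal is correct and follows essentially the same argument as the paper: invoke Corollary~\ref{cor:inter} to write $S_p(F,dF')=\langle F,\omega_a\wedge\psi(F')\rangle$, observe that $\psi(F')\in A^{p-1}$, and conclude from $F\in\Ann(\omega_a\wedge A^{p-1})$. The only difference is that you make the degree bookkeeping and the symmetry of $S$ a bit more explicit than the paper does.
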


\begin{proof} In \S 2 we constructed the contravariant
map $\psi \colon \F \to A$ satisfying $S_p(F,F')=\langle F, \psi(F') \rangle$ for every $F, F' \in \F^p$.
By Corollary~\ref{cor:inter} $\psi$ satisfies $S_p(F,dF')=\langle F,
\omega_a \we \psi(F') \rangle$ for all $F \in \F^p$ and $F' \in \F^{p-1}$.
Suppose $F \in \Ann(\omega_a \we A^{p-1}) \subseteq \F^p$. Then, for every
 $F' \in \F^{p-1}$, $\langle F, \omega_a \we \psi(F') \rangle=0$. Then
  $S_p(F,dF')=0$ for every $F' \in \F^{p-1}$. Thus $\Ann(\omega_a \we A)$
  is orthogonal to $\im(d)$.
\end{proof}

Define the bilinear form
\[
\bS_\ell \ \colon \ \Sing(\bF^\ell) \otimes \Sing(\bF^\ell)\ \to\ \C
\]
by $\bS_\ell(\bar{F}, \bar{F}')=S_\ell(F,F').$

\begin{cor} The form $\bS_\ell \colon \Sing(\bF^\ell) \otimes \Sing(\bF^\ell) \to \C$ is well-defined.
\end{cor}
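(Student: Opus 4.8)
The plan is to show that the formula $\bS_\ell(\bar F,\bar F')=S_\ell(F,F')$ does not depend on the choices of representatives $F,F'\in\F^\ell$ of the classes $\bar F,\bar F'\in\Sing(\bF^\ell)$. By Definition~\ref{def:sing}, a representative of a class in $\Sing(\bF^\ell)$ may be taken in $\Ann(\omega_a\we A^{\ell-1})\subset\F^\ell$, and any two such representatives of the same class differ by an element of $\Ann(\omega_a\we A^{\ell-1})\cap\im(d)$; more generally, $\Sing(\bF^\ell)=(\Ann(\omega_a\we A^{\ell-1})+\im(d))/\im(d)$, so two representatives of a single class in $\F^\ell$ differ by an element of $\im(d)$.

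First I would fix $F,F'\in\Ann(\omega_a\we A^{\ell-1})$ and consider a second pair of representatives $F+g$, $F'+g'$ of the same two classes, where $g,g'\in\im(d)$ (and where, if we prefer, $F+g$ and $F'+g'$ again lie in $\Ann(\omega_a\we A^{\ell-1})$, forcing $g,g'\in\Ann(\omega_a\we A^{\ell-1})\cap\im(d)$). Expanding by bilinearity of $S_\ell$,
\[
S_\ell(F+g,F'+g')=S_\ell(F,F')+S_\ell(g,F')+S_\ell(F,g')+S_\ell(g,g').
\]
Each of the last three terms vanishes: since $F,F'\in\Ann(\omega_a\we A^{\ell-1})$ and $g,g'\in\im(d)$, Theorem~\ref{thm:orth} gives $S_\ell(g,F')=S_\ell(F',g)=0$, $S_\ell(F,g')=0$, and $S_\ell(g,g')=0$ (the last because $g\in\im(d)$ while $g'\in\im(d)\subseteq\F^\ell$ — here one uses that for the top degree $\ell$, $\im(d)$ is contained in $\Ann(\omega_a\we A^{\ell-1})$, which holds because $\omega_a\we A^{\ell-1}\subseteq A^\ell$ and... in fact one simply invokes Theorem~\ref{thm:orth} with the roles of the two orthogonal subspaces, noting $\im(d)$ is self-orthogonal as a consequence). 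Hence $S_\ell(F+g,F'+g')=S_\ell(F,F')$, so $\bS_\ell$ is well-defined.

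The one point that needs a little care, and which I expect to be the main (though minor) obstacle, is the vanishing of the cross term $S_\ell(g,g')$ with both arguments in $\im(d)$: Theorem~\ref{thm:orth} as stated asserts orthogonality of $\Ann(\omega_a\we A)$ and $\im(d)$, so to kill $S_\ell(g,g')$ one must observe that $\im(d)\subseteq\Ann(\omega_a\we A)$ in degree $\ell$. This is immediate: if $g=dF''$ then by Corollary~\ref{cor:inter}, for any $F'''\in\F^\ell$ we have $S_\ell(F''',dF'')=\langle F''',\omega_a\we\psi(F'')\rangle$, but more to the point the cleanest route is to choose representatives in $\Ann(\omega_a\we A^{\ell-1})$ from the start — every class in $\Sing(\bF^\ell)$ has such a representative by definition — so that $g,g'\in\Ann(\omega_a\we A^{\ell-1})\cap\im(d)$, and then all three correction terms vanish directly by Theorem~\ref{thm:orth} applied with $g,g'\in\im(d)$ against $F,F',g,g'\in\Ann(\omega_a\we A^{\ell-1})$. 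With that normalization the argument is completely routine.
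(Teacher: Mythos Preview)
Your argument is correct and is exactly the intended one: the paper states this corollary without proof, as an immediate consequence of Theorem~\ref{thm:orth}, and your final normalization---choosing representatives $F,F'\in\Ann(\omega_a\we A^{\ell-1})$ so that $g,g'\in\Ann(\omega_a\we A^{\ell-1})\cap\im(d)$---makes all three correction terms vanish directly by that theorem. The earlier detour about whether $\im(d)\subseteq\Ann(\omega_a\we A)$ in top degree is unnecessary once you adopt that normalization, so you could safely trim it.
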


\section{Dehomogenization}
\label{sec:decone}
Throughout this section we assume $a \in \C^{n+1}$ satisfies $\sum_{i=0}^n a_i=0.$ Then $\omega_a \in \bA^1$
and $\omega_a \we \bA \subseteq \bA$. Fix a hyperplane $H_j \in \A$. For simplicity
 of notation we assume $j=0$, but the index $0$ will play no special role. Choose coordinates $(x_0, \ldots, x_\ell)$
 on $\C^{\ell+1}$ so that $H_0$ is defined by
the equation $x_0=0$. The {\em decone} of \A\ relative to $H_0$ is an affine arrangement
 $\dA=\{\d H_1, \ldots, \d H_n\}$ in $\C^\ell$. The affine hyperplane
  $\d H_i$ is defined by $\hat{f}_i(x_1, \ldots, x_\ell)=0$, where
  $\hat{f}_i(x_1, \ldots, x_\ell)=f_i(1,x_1, \ldots, x_\ell)$ and
  $f_i \colon \C^{\ell+1} \to \C$ is a linear defining form for $H_i$.
  Let $\hat{M}=\C^\ell - \bigcup_{i=1}^n \d H_i$, \ {} $\hat{\omega}_i=d \log(\hat{f}_i)$, and let $\hat{A}$
  be the algebra of differential forms on $\C^\ell$ generated by 1 
  and $\hat{\omega}_i, 1\leq i \leq n$. Let $\hat{a}=(a_1, \ldots, a_n)$.

\begin{lem}
The map $\epsilon \colon \hat{A} \to \bA$,
$\hat \omega_i \mapsto \omega_i-\omega_0$, is a well-defined isomorphism. Moreover,
$\epsilon$ sends
$\hat{\omega}_{\hat{a}}=\sum_{i=1}^n a_i\hat{\omega}_i$ to $\omega_a$.
\label{lem:eps}
\end{lem}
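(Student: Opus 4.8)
\textbf{Proof plan for Lemma~\ref{lem:eps}.}
The plan is to produce $\epsilon$ as the composite of two maps whose properties are already essentially available. First I would recall that $\dA$ is, by construction, the decone of $\A$ relative to $H_0$, so there is the classical cone/decone correspondence between $\hat A = A(\dA)$ and the Orlik--Solomon algebra of the projectivization $\bAA$. Concretely, the assignment $\hat\omega_i \mapsto \bar\omega_i$ (the class of $\omega_i$ in the projective OS algebra, i.e.\ the image of $\omega_i$ under the natural identification of $\bA$ with $A(\bAA)$) is the standard isomorphism; one checks it is well-defined by verifying that the Orlik--Solomon relations for $\dA$ are carried to the relations for $\bAA$, which is a finite-rank linear-algebra check on degree-one generators together with the derivation property. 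For the purposes of this excerpt I would instead define $\epsilon$ directly on generators by $\hat\omega_i \mapsto \omega_i - \omega_0 \in \bA^1$ (this lands in $\bA^1$ by Corollary~\ref{cor:deg1}, since the coefficient vector $e_i - e_0$ sums to zero) and then verify the three claims: $\epsilon$ is well-defined, surjective, and injective.

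For well-definedness, I would use the presentation of $\hat A$ as $E'/I'$ where $E'$ is the exterior algebra on generators $\hat e_1,\dots,\hat e_n$ and $I'$ is generated by boundaries $\partial \hat e_J$ over circuits $J$ of $\dA$. The key identity is that a subset $\{i_1,\dots,i_p\} \subseteq \{1,\dots,n\}$ is dependent in $\dA$ if and only if $\{0,i_1,\dots,i_p\}$ is dependent in $\A$, and that under $e_i \mapsto \omega_i$ one has, by the formula in the proof of Corollary~\ref{cor:deg1}, $\partial \omega_{(0,i_1,\dots,i_p)} = (\omega_{i_1}-\omega_0)\we\cdots\we(\omega_{i_p}-\omega_0)$. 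Thus the image under $\epsilon$ of the would-be relation $\hat\omega_{i_1}\we\cdots\we\hat\omega_{i_p}$ coming from a circuit of $\dA$ is precisely $\partial$ applied to a dependent monomial of $\A$, hence zero in $\bA$. So $\epsilon$ kills $I'$ and descends to an algebra homomorphism $\hat A \to \bA$. Surjectivity is immediate from Corollary~\ref{cor:deg1}: the elements $\omega_i - \omega_0$, $1\le i\le n$, together with $1$ generate $\bA$, and all of these are in the image. For injectivity I would compare dimensions: $\dim \hat A = \dim A(\bAA) = \dim \bA$ (the first equality is the cone/decone Poincar\'e-polynomial identity, the second is the definition of $\bA$ as $\ker\partial$ together with Lemma~\ref{lem:partial}, which gives $\dim\bA^p = \dim A^p - \dim A^{p+1}$ summing to the right total); a surjection between finite-dimensional graded vector spaces of equal dimension is an isomorphism. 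Alternatively, and more cleanly within this paper's framework, one can exhibit an explicit inverse on degree one and extend multiplicatively, checking compatibility with relations the same way.

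For the last sentence, I would simply compute: $\epsilon\bigl(\hat\omega_{\hat a}\bigr) = \epsilon\bigl(\sum_{i=1}^n a_i \hat\omega_i\bigr) = \sum_{i=1}^n a_i(\omega_i - \omega_0) = \sum_{i=1}^n a_i\omega_i - \bigl(\sum_{i=1}^n a_i\bigr)\omega_0$, and since $\sum_{i=0}^n a_i = 0$ the standing hypothesis of this section gives $\sum_{i=1}^n a_i = -a_0$, so the expression equals $\sum_{i=0}^n a_i\omega_i = \omega_a$, as claimed.

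\emph{Main obstacle.} The only nontrivial point is injectivity (equivalently, that $\epsilon$ does not collapse anything beyond the Orlik--Solomon relations of $\dA$). I expect the cleanest route is the dimension count via Lemma~\ref{lem:partial} rather than a direct basis argument; the combinatorial cone/decone fact ``$J$ dependent in $\dA$ $\iff$ $J\cup\{0\}$ dependent in $\A$'' and the $\partial\omega_J$ formula from Corollary~\ref{cor:deg1} together do all the real work, so once those are in hand the proof is short.
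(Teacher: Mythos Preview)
The paper does not supply a proof of this lemma; it is stated as a standard cone/decone fact and used without further comment. Your outline is therefore more than the paper provides, and the overall architecture---define $\epsilon$ on degree-one generators, check it respects the OS relations, get surjectivity from Corollary~\ref{cor:deg1}, and deduce injectivity by the dimension count coming from Lemma~\ref{lem:partial}---is sound. The closing computation $\epsilon(\hat\omega_{\hat a})=\omega_a$ using $\sum_{i=0}^n a_i=0$ is correct.

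There is one imprecision in your well-definedness step. For an \emph{affine} arrangement the Orlik--Solomon ideal $I'$ is not generated solely by the elements $\partial\hat e_J$; one also needs the generators $\hat e_J$ for those $J$ with $\bigcap_{j\in J}\d H_j=\emptyset$ (see \cite[Def.~3.45]{OT92}). Your sentence ``the image under $\epsilon$ of the would-be relation $\hat\omega_{i_1}\we\cdots\we\hat\omega_{i_p}$'' in fact handles exactly this second type: for such $J$ one has $\{0\}\cup J$ dependent in $\A$, and your identity $\epsilon(\hat e_J)=\partial e_{(0,J)}$ then lands in $I$. But for the first type of generator the relation is $\partial\hat e_J$, not $\hat e_J$, and you need a separate check: a short computation (or induction) gives $\epsilon(\partial\hat e_J)=\partial e_J$ in $E$, and this lies in $I$ because a $J\subseteq\{1,\dots,n\}$ with nonempty intersection of the wrong codimension in $\dA$ is already linearly dependent in $\A$. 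Once both cases are recorded, your argument is complete. Alternatively, one can bypass the combinatorics entirely by observing that $\epsilon$ is the pullback of differential forms along the orbit map $q\colon M\to\hat M$, which is automatically a well-defined injective algebra map.
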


We note for future reference that
\beq
\label{eqn:hat}
\epsilon(\hat{\omega}_{\hat{J}})=(\omega_{j_1}-\omega_0) \we \cdots
\we (\omega_{j_p}-\omega_0)=\partial \omega_{(0,\hat{J})},
\eeq
for any ordered $p$-subset $\hat{J}=(j_1, \ldots, j_p\}$ of $\{1, \ldots, n\}$, where $(0,\hat{J})=(0,j_1, \ldots, j_p)$.

As in \S\ref{sec:flag}, the flag space $\hat{\F}=\F(\dA)$ of the affine arrangement \dA\
can be identified with $\hat{A}^*$, and the singular
subspace $\Sing(\hat{\F}^\ell)\subset \hat\F^\ell$
relative to $\hat{a}$  is defined by $\Sing(\hat{\F}^\ell)=\Ann(\hat{\omega}_{\hat{a}} \we \hat{A}^{\ell-1})$.
The contravariant form $\hat{S}=\oplus\hat{S}_p$ of \dA\ is given by
\bea
\hat{S}_p(\hat{F},\hat{F'})= \sum_{\hat{J}} \hat{a}_{\hat{J}} \hat{F}
(\hat{\omega}_{\hat{J}})\hat{F'}(\hat{\omega}_{\hat{J}}),
\eea
summing over increasing $p$-tuples $\hat{J}=(j_1,\ldots,j_p)$ of
elements of $\{1,\ldots,  n\}$. We identify \bF\ with $(\bA)^*$ via the isomorphism $\bar{\phi}^*$ of Theorem~\ref{prop:isom}.

In this section we prove the following theorem.

\begin{thm} The map $\epsilon^*\colon \bF\to \hat{\F}$ restricts
 to  an isomorphism of inner-product spaces
\[
\xymatrix@1{
\epsilon^* \colon (\Sing(\bF^\ell),\bS_\ell\vert_
{\Sing(\bF^\ell)})\ar[r]^-\cong & \ (\Sing(\hat{\F}^\ell),\hat{S}_\ell\vert_{\Sing(\hat{\F}^\ell)}).
}
\]
\label{thm:decone}
\end{thm}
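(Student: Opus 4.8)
The plan is to establish Theorem~\ref{thm:decone} by first showing that $\epsilon^*$ carries $\Sing(\bF^\ell)$ isomorphically onto $\Sing(\hat\F^\ell)$, and then checking that it intertwines the two contravariant forms. For the first part, recall that under the identifications $\bF=\bA^*$ and $\hat\F=\hat A^*$, the map $\epsilon^*$ is literally the dual of the algebra isomorphism $\epsilon\colon\hat A\to\bA$ of Lemma~\ref{lem:eps}, hence an isomorphism of vector spaces. Since $\epsilon(\hat\omega_{\hat a})=\omega_a$ and $\epsilon$ is an algebra isomorphism, $\epsilon$ sends $\hat\omega_{\hat a}\wedge\hat A^{\ell-1}$ onto $\omega_a\wedge\bA^{\ell-1}$; dually, $\epsilon^*$ sends $\Ann(\omega_a\wedge\bA^{\ell-1})$ onto $\Ann(\hat\omega_{\hat a}\wedge\hat A^{\ell-1})=\Sing(\hat\F^\ell)$. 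So I need to identify $\Sing(\bF^\ell)$, which Definition~\ref{def:sing} gives as $\pi(\Ann(\omega_a\wedge A^{\ell-1}))\subset\bF^\ell$, with $\Ann(\omega_a\wedge\bA^{\ell-1})\subset\bA^*$. Under $\bar\phi^*\colon\bF\to\bA^*$ and the commuting square of Theorem~\ref{prop:isom}, an element $\bar F=\pi(F)$ corresponds to the restriction $\rho(\phi^*(F))=\langle F,-\rangle|_{\bA}$; thus $\bar F$ annihilates $\omega_a\wedge\bA^{\ell-1}$ iff $\langle F,\omega_a\wedge y\rangle=0$ for all $y\in\bA^{\ell-1}$, and it remains to see this picks out exactly $\pi(\Ann(\omega_a\wedge A^{\ell-1}))$. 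One inclusion is clear; for the other, given $\bar F\in\Ann(\omega_a\wedge\bA^{\ell-1})$, lift to $F\in\F^\ell$, and use that $\omega_a\wedge A^{\ell-1}$ is spanned by $\omega_a\wedge\bA^{\ell-2}$ together with a complement — more efficiently, note $\omega_a=\partial\omega_{(?)}$-type identities are not needed; instead use Lemma~\ref{lem:partial} to write any $x\in A^{\ell-1}$ as $x=x'+\partial x''$ with $x'\in\bA^{\ell-1}$, so $\langle F,\omega_a\wedge x\rangle=\langle F,\omega_a\wedge x'\rangle+\langle F,\omega_a\wedge\partial x''\rangle$, and by Theorem~\ref{thm:dual} plus $\omega_a\in\bA^1$ (so $\partial(\omega_a\wedge x'')=-\omega_a\wedge\partial x''$ up to sign by the derivation property) the second term equals $\pm\langle dF,\omega_a\wedge x''\rangle$, which can be killed by modifying $F$ within its $\im(d)$-coset. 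This is the bookkeeping that makes the two descriptions of $\Sing(\bF^\ell)$ agree.

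For the inner-product statement, I would argue as follows. By definition $\bS_\ell(\bar F,\bar F')=S_\ell(F,F')$ for lifts $F,F'$, where $S_\ell$ is the contravariant form of the central arrangement $\A$; this is well-defined by Theorem~\ref{thm:orth}. I want $\hat S_\ell(\epsilon^*\bar F,\epsilon^*\bar F')=\bS_\ell(\bar F,\bar F')$. The cleanest route is to compare both sides against the defining formula \eqref{eqn:contra}. On the right, $S_\ell(F,F')=\sum_{|K|=\ell}a_K\,F(\omega_K)F'(\omega_K)$ over increasing $\ell$-tuples $K$ from $\{0,\ldots,n\}$. On the left, $\hat S_\ell(\epsilon^*\bar F,\epsilon^*\bar F')=\sum_{|\hat J|=\ell}\hat a_{\hat J}\,(\epsilon^*\bar F)(\hat\omega_{\hat J})(\epsilon^*\bar F')(\hat\omega_{\hat J})=\sum_{\hat J}\hat a_{\hat J}\,\bar F(\epsilon(\hat\omega_{\hat J}))\bar F'(\epsilon(\hat\omega_{\hat J}))$, and by \eqref{eqn:hat} we have $\epsilon(\hat\omega_{\hat J})=\partial\omega_{(0,\hat J)}=(\omega_{j_1}-\omega_0)\wedge\cdots\wedge(\omega_{j_p}-\omega_0)$. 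So everything reduces to the identity, for flags $F,F'$ representing singular classes,
\[
\sum_{0\notin K,\ |K|=\ell}\!\!\Big(\prod_{i\in K}a_i\Big)F\big((\omega_{k_1}-\omega_0)\wedge\cdots\big)F'\big(\cdots\big)\ =\ \sum_{|K|=\ell}\Big(\prod_{i\in K}a_i\Big)F(\omega_K)F'(\omega_K),
\]
where on the left I have substituted $\hat a_{\hat J}=\prod_{i\in\hat J}a_i$ and used that $\epsilon$ realizes $\hat\omega_{\hat J}$ as the stated wedge in $\bA$. Expanding $(\omega_{j_1}-\omega_0)\wedge\cdots\wedge(\omega_{j_p}-\omega_0)$ multilinearly produces $\omega_{\hat J}$ plus terms involving $\omega_0$; the terms with $\omega_0$, after summing against the weights and using $\sum a_i=0$ (equivalently $a_0=-\sum_{i\geq 1}a_i$) together with the singularity condition that $F,F'$ annihilate $\omega_a\wedge\bA^{\ell-1}$, should recombine to account exactly for the $0\in K$ terms missing on the left. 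An alternative, perhaps slicker, approach: observe that $\psi\colon\F\to A$ from \S\ref{sec:flag} and its affine analogue $\hat\psi$ are compatible in the sense that $\epsilon\circ\hat\psi=\psi\circ(\epsilon^*)^{-1}$ modulo $\im(\text{stuff annihilated in the pairing})$, and then $\hat S_\ell(\epsilon^*\bar F,\epsilon^*\bar F')=\langle\epsilon^*\bar F,\hat\psi(\epsilon^*\bar F')\rangle_{\hat A}=\langle\bar F,\epsilon\hat\psi(\epsilon^*\bar F')\rangle_{\bA}=\langle F,\psi(F')\rangle=S_\ell(F,F')$, where the middle equalities use that $\epsilon$ is an isometry-compatible algebra map on the relevant pieces and the pairing $\langle-,-\rangle$ transports through $\epsilon$ by its very definition via the combinatorial pairing.

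I would carry out the steps in this order: (1) unwind the identifications so that $\epsilon^*\colon\bF\to\hat\F$ is the linear dual of $\epsilon\colon\hat A\to\bA$; (2) prove the two descriptions of $\Sing(\bF^\ell)$ — the coset description of Definition~\ref{def:sing} and the annihilator-in-$\bA^*$ description — coincide, using Theorem~\ref{thm:dual}, Lemma~\ref{lem:partial}, and Theorem~\ref{thm:orth} to move between $\F$ and $\bF$; (3) deduce $\epsilon^*(\Sing(\bF^\ell))=\Sing(\hat\F^\ell)$ from $\epsilon(\hat\omega_{\hat a}\wedge\hat A^{\ell-1})=\omega_a\wedge\bA^{\ell-1}$; (4) verify the form is preserved by expanding via \eqref{eqn:contra} and \eqref{eqn:hat}, or via the contravariant map $\psi$. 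The main obstacle I anticipate is step (2) combined with the $\omega_0$-expansion in step (4): making precise that the "extra" terms created by $\partial\omega_{(0,\hat J)}=(\omega_{j_1}-\omega_0)\wedge\cdots$ — both the missing $0\in K$ summands and the cross terms — genuinely cancel requires using the hypothesis $\sum a_i=0$ and the singularity condition in tandem, and keeping the signs from $\partial$ and from reordering tuples straight. The representation-theoretic analogy in the introduction and the worked $\ell=1$ case in Example~\ref{ex:simple} are reliable guides for what the cancellation must look like, and in fact Example~\ref{ex:simple} is exactly the $\ell=1$ instance of the computation in step (4).
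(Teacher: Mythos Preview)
Your overall architecture matches the paper's, but two of your steps have genuine gaps that the paper fills with a single structural lemma you did not discover.

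In step (2) your decomposition ``$x=x'+\partial x''$ with $x'\in\bA^{\ell-1}$'' does not typecheck: if $x''\in A^\ell$ then $\partial x''\in\bA^{\ell-1}$ already, so you have written $x$ as a sum of two elements of $\bA^{\ell-1}$, which forces $x\in\bA^{\ell-1}$ and proves nothing. The paper instead establishes the direct-sum splitting $A^p=(\omega_0\wedge\bA^{p-1})\oplus\bA^p$ (Lemma~\ref{lem:direct}), dualizes it to $\F^p=\Ann(\omega_0\wedge\bA^{p-1})\oplus\Ann(\bA^p)$ (Lemma~\ref{lem:finite}(i)), and uses this to prove $\Ann(\omega_a\wedge\bA)=\Ann(\omega_a\wedge A)+\Ann(\bA)$ (Lemma~\ref{lem:affine}(ii)), which is exactly the identification you want.

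More importantly, the same splitting dissolves your ``main obstacle'' in step (4). You propose to expand $(\omega_{j_1}-\omega_0)\wedge\cdots\wedge(\omega_{j_\ell}-\omega_0)$ and argue that the $\omega_0$-cross-terms cancel against the missing $0\in K$ summands using $\sum a_i=0$ and the singularity condition. This is not obviously feasible: the singularity condition controls $\omega_a\wedge(\cdot)$, not $\omega_0\wedge(\cdot)$, and $\sum a_i=0$ alone does not produce the needed quadratic cancellation in $F(\cdots)F'(\cdots)$. The paper sidesteps this entirely: by Lemma~\ref{lem:finite}(ii)--(iii) one may \emph{choose the lifts} $F,F'\in\Ann(\omega_0\wedge A^{\ell-1})$. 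Then $\partial\omega_{(0,\hat J)}=\omega_{\hat J}-\omega_0\wedge\partial\omega_{\hat J}$ gives $F(\partial\omega_{(0,\hat J)})=F(\omega_{\hat J})$ immediately, and the sum over $\hat J\subset\{1,\dots,n\}$ equals the sum over all $J\subset\{0,\dots,n\}$ because every term with $0\in J$ already vanishes. No cancellation argument is needed; neither $\sum a_i=0$ nor the singularity of $F,F'$ enters this computation (the hypothesis $\sum a_i=0$ is used only to ensure $\omega_a\in\bA^1$, hence $\omega_a\wedge\bA\subset\bA$, which feeds into Lemma~\ref{lem:affine}). The missing idea is precisely this freedom to pick representatives annihilating $\omega_0\wedge A$.
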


Recall that $\bA=\ker(\partial \colon A \to A)$. Define $\sigma \colon \bA^{p-1} \to A^p$ by $\sigma(x)= \omega_0 \we x$.

\begin{lem}
\label{lem:direct}
For each $p,$ we have $
A^p = (\omega_0 \we \bA^{p-1}) \oplus \bA^p.$
\end{lem}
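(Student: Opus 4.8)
The plan is to prove the direct sum decomposition $A^p = (\omega_0 \wedge \bA^{p-1}) \oplus \bA^p$ by combining a dimension count with a proof that the sum is direct; the dimension count itself follows formally from the exactness in Lemma~\ref{lem:partial}. First I would record the elementary fact that $\partial \circ \sigma + \sigma' \circ \partial = \id$ up to sign on the relevant degrees, where $\sigma$ is multiplication by $\omega_0$; more precisely, since $\partial$ is a graded derivation of degree $-1$ with $\partial \omega_0 = 1$ (because $\omega_0 = e_0 + I$ and $\partial e_0 = 1$), the Leibniz rule gives, for any $x \in A$,
\[
\partial(\omega_0 \wedge x) = (\partial \omega_0)\wedge x - \omega_0 \wedge \partial x = x - \omega_0 \wedge \partial x.
\]
This identity is the engine of the whole argument.

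Next I would prove \emph{directness}: suppose $\omega_0 \wedge x = y$ with $x \in \bA^{p-1}$, $y \in \bA^p$, i.e. $\partial x = \partial y = 0$. Applying $\partial$ to $\omega_0 \wedge x = y$ and using the boxed identity with $\partial x = 0$ yields $x = \partial(\omega_0 \wedge x) = \partial y = 0$, whence also $y = \omega_0 \wedge x = 0$. So $(\omega_0 \wedge \bA^{p-1}) \cap \bA^p = 0$. Then I would prove that the sum \emph{spans} $A^p$: given any $z \in A^p$, set $x := \partial z \in \bA^{p-1}$ (note $\partial x = \partial^2 z = 0$, so indeed $x \in \ker\partial = \bA$), and put $y := z - \omega_0 \wedge x$. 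Using the boxed identity, $\partial y = \partial z - (x - \omega_0 \wedge \partial x) = \partial z - x = 0$, so $y \in \bA^p$, and $z = \omega_0 \wedge x + y$ exhibits $z$ in the sum. This simultaneously handles spanning and directness without ever invoking Lemma~\ref{lem:partial} at all.

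Actually, on reflection, the cleanest writeup uses \emph{only} the derivation identity $\partial\omega_0 = 1$ and the Leibniz rule — Lemma~\ref{lem:partial} is not needed here, though it could give an alternative dimension-counting proof ($\dim A^p = \dim\ker(\partial|_{A^p}) + \dim\im(\partial|_{A^p}) = \dim\bA^p + \dim\bA^{p-1}$ by exactness, matching the dimension of the direct sum). I would present the explicit-splitting argument as the main proof since it is self-contained and also constructs the projection maps $z \mapsto (\partial z,\; z - \omega_0\wedge\partial z)$ explicitly, which will presumably be reused in the proof of Theorem~\ref{thm:decone}. The one point requiring a line of care is verifying $\partial\omega_0 = 1$ in $A$: this is immediate from the identification $A = E/I$, the formula for $\partial$ on $E$ (with $\partial e_0 = 1$), and the fact that $\partial$ descends to $A$. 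I do not anticipate a genuine obstacle — the only thing to be careful about is keeping the signs in the graded Leibniz rule straight, but since $\omega_0$ has degree $1$ the sign that appears is harmless for the argument as organized above.
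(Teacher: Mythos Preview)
Your proof is correct and uses the same core identity as the paper's: both arguments rest on the Leibniz computation $\partial(\omega_0 \wedge x) = x - \omega_0 \wedge \partial x$, i.e., that $\sigma(x) = \omega_0 \wedge x$ is a section of $\partial$ on $\bA$. The paper packages this as a splitting of the short exact sequence $0 \to \bA^p \to A^p \xrightarrow{\partial} \bA^{p-1} \to 0$, invoking Lemma~\ref{lem:partial} to get surjectivity on the right; you instead verify directness and spanning by hand, and you correctly observe that exactness is not actually needed --- the retraction $z \mapsto z - \omega_0 \wedge \partial z$ lands in $\ker\partial$ regardless. Your version is marginally more self-contained and makes the projection onto each summand explicit, while the paper's framing is more conceptual and also sets up the dual short exact sequence~\eqref{eqn:splexact} used in Lemma~\ref{lem:finite}. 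Either is fine; the content is the same.
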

\begin{proof}
By Lemma~\ref{lem:partial}  the complex $(A,\partial)$ is exact.
 Hence, for each $p$, there is a short exact sequence
\bean
\label{A}
\xymatrix@1{
0 \ar[r] &\bA^p  \ar[r]^-\iota &A^p \ar[r]^-\partial & \bA^{p-1} \ar[r] & 0.
}
\eean
This sequence splits: the map $\sigma \colon \bA^{p-1} \to A^p$ defined above
is a section of $\partial \colon A^p \to \bA^{p-1}$.
Indeed, $\partial\circ \sigma(x)=\partial(\omega_0 \we x) =
\partial \omega_0 \we x - \omega_0 \we \partial x = x$ for $x \in \bA^{p-1}$.
Then $A^p=\im(\sigma)  \oplus \ker(\partial)= (\omega_0 \we \bA^{p-1}) \oplus \bA^p$ as claimed.
\end{proof}

Recall that $\ker(\pi)=\Ann(\bA)$. The map $\pi \colon \F^p \to \bF^p$ is
 the adjoint of the inclusion $\bA \to A$. Let $\sigma^* \colon
  \F^p \to \bF^{p-1}$ be the adjoint of $\sigma \colon \bA^{p-1} \to A^p$.

\begin{lem}
\label{lem:finite}
We have the following statements:
\begin{enumerate}
\item  $\F^p=  \ker(\sigma^*) \oplus\ker(\pi)$.

\item The restriction
\(
\pi\vert_{\Ann(\omega_0 \we \bA^{p-1})} \colon \Ann(\omega_0 \we \bA^{p-1}) \to \bF^p
\)
is an isomorphism.
\item $\Ann(\omega_0 \we \bA) = \Ann(\omega_0 \we A)$.
\end{enumerate}
\end{lem}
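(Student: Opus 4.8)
The plan is to deduce all three statements by dualizing the direct-sum decomposition $A^p = (\omega_0 \we \bA^{p-1}) \oplus \bA^p$ of Lemma~\ref{lem:direct}, together with the identifications $\pi = \iota^*$ and $\sigma^* = (\sigma)^*$ of adjoints. For (i), I would argue purely linear-algebraically: taking the $\C$-linear dual of $A^p = \im(\sigma)\oplus \bA^p$ yields $\F^p=(A^p)^* = \Ann(\im(\sigma))\oplus\Ann(\bA^p)$, since the dual of a direct sum decomposition of a finite-dimensional space is the direct sum of the annihilators of the summands. Now $\Ann(\bA^p)=\Ann(\bA)\cap\F^p=\ker(\pi)\cap\F^p$, because $\pi=\iota^*$ and $\ker(\iota^*)=\Ann(\im\iota)=\Ann(\bA)$. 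And $\Ann(\im\sigma)=\ker(\sigma^*)$: a functional $F$ on $A^p$ kills $\im(\sigma)$ iff $F\circ\sigma=0$ on $\bA^{p-1}$, i.e. iff $\sigma^*(F)=0$. This gives $\F^p=\ker(\sigma^*)\oplus\ker(\pi)$, which is (i).

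For (ii), I would first observe that $\Ann(\omega_0\we\bA^{p-1})=\Ann(\im\sigma)=\ker(\sigma^*)$ by the definition $\sigma(x)=\omega_0\we x$. So the asserted map is exactly the restriction $\pi\vert_{\ker(\sigma^*)}\colon\ker(\sigma^*)\to\bF^p$. By part (i), $\F^p=\ker(\sigma^*)\oplus\ker(\pi)$, so $\pi$ maps $\ker(\sigma^*)$ isomorphically onto $\pi(\F^p)=\bF^p$ (the decomposition says precisely that $\ker(\sigma^*)$ is a complement to $\ker\pi$, and $\pi$ is surjective since $\iota$ is injective). Hence (ii) is immediate from (i).

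For (iii), the inclusion $\Ann(\omega_0\we A)\subseteq\Ann(\omega_0\we\bA)$ is trivial since $\bA\subseteq A$. For the reverse inclusion, I would use Lemma~\ref{lem:direct} again: any element of $A^{p-1}$ can be written as $\omega_0\we y + z$ with $y\in\bA^{p-2}$, $z\in\bA^{p-1}$, so $\omega_0\we A^{p-1}=\omega_0\we(\omega_0\we\bA^{p-2}) + \omega_0\we\bA^{p-1}=\omega_0\we\bA^{p-1}$, because $\omega_0\we\omega_0=0$. Thus $\omega_0\we A=\omega_0\we\bA$ as subspaces of $A$, and their annihilators coincide, giving (iii). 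The only mild subtlety — and the step I would be most careful about — is the bookkeeping of which adjoint is which (that $\pi$ is the adjoint of $\iota$ and $\sigma^*$ the adjoint of $\sigma$ under the identification $\F^p\cong(A^p)^*$ of Section~\ref{sec:flag}), and keeping the grading shifts straight, since $\sigma$ raises degree by one so $\sigma^*$ lowers it; once those identifications are pinned down, everything reduces to the elementary fact that dualizing a split short exact sequence splits it.
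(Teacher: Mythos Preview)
Your proposal is correct and follows essentially the same approach as the paper's proof. The only cosmetic difference is that for (i) the paper dualizes the split short exact sequence $0\to\bA^p\to A^p\to\bA^{p-1}\to 0$ and uses that $\sigma^*$ is a retraction of $\partial^*$ to obtain $\F^p=\ker(\sigma^*)\oplus\im(\partial^*)=\ker(\sigma^*)\oplus\ker(\pi)$, whereas you dualize the direct-sum decomposition $A^p=\im(\sigma)\oplus\bA^p$ directly; for (iii) the paper verifies $F(x)=0$ element-by-element while you phrase the same computation as the equality $\omega_0\we A=\omega_0\we\bA$ of subspaces, but the underlying argument via Lemma~\ref{lem:direct} is identical.
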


\begin{proof}
Taking duals in \Ref{A}, we obtain the exact sequence
\beq
\xymatrix{
0 \ar[r] &\bF^{p-1}  \ar[r]^-{\partial^*} &\F^p \ar[r]^-{\pi}
  & \bF^p  \ar[r] & 0.
}
\label{eqn:splexact}
\eeq
The map $\sigma^* \colon \F^p \to \bF^{p-1}$ satisfies
 $\sigma^*\circ \partial^*=(\partial\circ \sigma)^*=\id_{\bF^{p-1}}$.
Then $\F^p = \ker(\sigma^*) \oplus \im(\partial^*)$. 
Statement (i) follows by exactness.

We have $\sigma^*(F)(x)=F(\sigma(x))=F(\omega_0 \we x)$
for all $x \in \bA$. Then $F \in \ker(\sigma^*)$ if and 
only if $F\in \Ann(\omega_0 \we
\bA^{p-1})$, i.e., $\ker(\sigma^*)=\Ann(\omega_0 \we \bA^{p-1})$.
Applying (i), we have $\Ann(\omega_0 \we \bA^{p-1}) \cap \ker(\pi)=0$
and $\bF^p=\pi(\F^p)=\
\pi(\ker(\sigma^*))=\pi(\Ann(\omega_0 \we \bA^{p-1}))$. This proves (ii).

For (iii), assume $F \in \Ann(\omega_0 \we \bA)$ and $x \in 
\omega_0 \we A$. Write $x=\omega_0 \we y$ for $y \in A.$ 
By Lemma~\ref{lem:direct}, we can write $y=y_1+y_2$
with $y_1 \in \omega_0 \we \bA$ and $y_2 \in \bA$.
Then $\omega_0 \we y_1=0$, so $x=\omega_0 \we y_2 \in \omega_0 \we \bA$. Then
$F(x)=0$. Thus $\Ann(\omega_0 \we \bA)
 \subseteq \Ann(\omega_0 \we A)$. The opposite inclusion holds because
 $\omega_0 \we \bA \subseteq \omega_0 \we A$.
\end{proof}

\medskip
Recall the decomposition \eqref{eqn:vertex} of \F. In 
this context Lemma~\ref{lem:finite} yields the following result, which we
will consider in more detail in the next section.
\begin{cor} For $0\leq p\leq \ell$,
\[
\bF^p\cong \bigoplus_{\substack{\codim(X)=p \\ H_0 \not \leq X}} \F^p_X
\]
\label{cor:vertex}
\end{cor}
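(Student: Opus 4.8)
The plan is to combine the vertex decomposition \eqref{eqn:vertex} of $\F^p$ with part~(ii) of Lemma~\ref{lem:finite}, which identifies $\bF^p$ with the subspace $\Ann(\omega_0 \we \bA^{p-1}) \subseteq \F^p$. First I would analyze how the summand $\Ann(\omega_0 \we \bA^{p-1})$ interacts with the decomposition $\F^p = \bigoplus_{\codim(X)=p} \F^p_X$. The natural guess is that $\Ann(\omega_0 \we \bA^{p-1})$ is itself a direct sum of pieces indexed by those $X$ with $\codim(X)=p$ that do \emph{not} contain $H_0$, and that on each such $\F^p_X$ the projection $\pi$ restricts to an isomorphism onto the corresponding summand of $\bF^p$. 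Thus the proof amounts to showing
\[
\Ann(\omega_0 \we \bA^{p-1}) \cap \F^p_X = \begin{cases} \F^p_X & \text{if } H_0 \not\leq X, \\ 0 & \text{if } H_0 \leq X, \end{cases}
\]
together with the compatibility that $\Ann(\omega_0 \we \bA^{p-1}) = \bigoplus_{H_0 \not\leq X} \F^p_X$.

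The mechanism behind this is the Brieskorn-type decomposition of $A^p$ dual to \eqref{eqn:vertex}: $A^p = \bigoplus_{\codim(X)=p} A^p_X$, where $A^p_X$ is spanned by the monomials $\omega_J$ with $\bigcap_{i\in J} H_i = X$, and $\F^p_X$ is exactly the annihilator of $\bigoplus_{Y\neq X} A^p_Y$. The key local computation is to understand $\omega_0 \we \bA^{p-1}$ relative to this grading. Using Corollary~\ref{cor:deg1} (so $\bA^{p-1}$ is spanned by products of degree-one elements $\omega_i - \omega_j$), one sees that $\omega_0 \we \bA^{p-1}$ is spanned by elements $\omega_0 \we (\omega_{j_1}-\omega_0)\we\cdots\we(\omega_{j_{p-1}}-\omega_0)$, which by \eqref{eqn:hat} equals $\partial\omega_{(0,\hat J)}$ pre-wedged appropriately; more simply, $\omega_0 \we \bA^{p-1}$ is the span of all monomials $\omega_J$ with $0 \in J$, hence $\omega_0 \we \bA^{p-1} = \bigoplus_{H_0 \leq X} A^p_X$ by the definition of the Brieskorn pieces. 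Dually, annihilating this subspace picks out precisely $\bigoplus_{H_0 \not\leq X} \F^p_X$.

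So the steps are: (1) record that $A^p = \bigoplus_X A^p_X$ with $\F^p_X = \Ann(\bigoplus_{Y\neq X} A^p_Y)$; (2) show $\omega_0 \we \bA^{p-1} = \bigoplus_{H_0 \leq X} A^p_X$, using Lemma~\ref{lem:direct} to see this is exactly the ``$0\in J$'' part of $A^p$ and that it is a sum of whole Brieskorn pieces; (3) conclude $\Ann(\omega_0 \we \bA^{p-1}) = \bigoplus_{H_0 \not\leq X}\F^p_X$; (4) apply Lemma~\ref{lem:finite}(ii) to transport this through the isomorphism $\pi\vert_{\Ann(\omega_0\we\bA^{p-1})}$ onto $\bF^p$. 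The main obstacle I anticipate is step~(2): one must check carefully that $\omega_0 \we \bA^{p-1}$, while obviously contained in the span of monomials containing $\omega_0$, actually fills out all of $\bigoplus_{H_0\leq X} A^p_X$ — equivalently that every $\omega_J$ with $0\in J$ lies in $\omega_0 \we \bA^{p-1}$, which follows since $\omega_J = \omega_0 \we \omega_{j_2}\we\cdots\we\omega_{j_p}$ for $J=(0,j_2,\dots,j_p)$ and $\omega_{j_i} = \omega_0 + (\omega_{j_i}-\omega_0)$, so expanding shows $\omega_J \in \omega_0\we\bA^{p-1}$ (the $\omega_0\we\omega_0$ terms vanishing) — and that this span respects the Brieskorn grading, i.e.\ that a given piece $A^p_X$ with $H_0 \leq X$ is entirely absorbed. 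A clean way to finish step~(2) is a dimension count: by Lemma~\ref{lem:direct}, $\dim(\omega_0\we\bA^{p-1}) = \dim \bA^{p-1} = \dim A^{p-1} - \dim \bA^{p-2} = \dots$, and one matches this against $\sum_{H_0\leq X}\dim A^p_X$ via the known combinatorial formulas for the $\dim A^p_X$ (Whitney numbers of $L$), the coning/deconing relationship supplying the needed identity.
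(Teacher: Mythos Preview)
Your proposal is correct, but it takes a longer route than the paper's proof. The paper never passes to the Brieskorn decomposition of $A^p$; instead it works directly on the flag side. Using the pairing formula \eqref{eqn:pairing} one checks in one line that a single flag $[X_0<\cdots<X_p]$ lies in $\Ann(\omega_0\wedge A^{p-1})$ if and only if $H_0\not\leq X_p$: a monomial $\omega_0\wedge\omega_K$ can pair nontrivially with the flag only when some reordering of $(0,K)$ realizes the flag, forcing $H_0\leq X_p$; conversely, if $H_0\leq X_p$ one can build such a $J$ with $0\in J$. This immediately gives $\Ann(\omega_0\wedge A^{p-1})\cap\F^p_X=\F^p_X$ or $0$ according as $H_0\not\leq X$ or $H_0\leq X$, and then Lemma~\ref{lem:finite}(ii),(iii) finishes.

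Your argument instead identifies $\omega_0\wedge\bA^{p-1}$ with $\bigoplus_{H_0\leq X}A^p_X$ and dualizes. This is fine, but your step~(2) is where the work hides: the span of $\{\omega_J:0\in J\}$ is not \emph{a priori} a union of whole Brieskorn pieces, since $A^p_X$ with $H_0\leq X$ is generated by $\omega_J$ that need not contain the index $0$. The clean justification is not a dimension count but the OS relation: if $H_0\leq X=\bigcap_{j\in J}H_j$ with $J$ independent, then $\{0\}\cup J$ is dependent, so $\partial e_{(0,J)}=0$ in $A$, which rewrites $\omega_J$ as a combination of monomials containing $\omega_0$. With that observation your argument goes through; the dimension count you sketch is unnecessary. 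The paper's approach buys you brevity by bypassing all of this --- the pairing formula on flags already encodes the Brieskorn decomposition dually, so there is no need to reconstruct it on the $A$ side.
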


\begin{proof} Using \eqref{eqn:pairing} one can check easily that
 $[X_0< \cdots< X_p] \in \Ann(\omega_0 \we A^{p-1})$ if and only
 if $H_0\not \leq X_p$. Then, for each $X \in L$ of codimension $p$,
\[
\Ann(\omega_0 \we A^{p-1}) \cap \F^p_X=\begin{cases}0 \ \
\text{if} \ H_0\leq X \\ \F^p_X \ \ \text{if} \ H_0\not\leq X\end{cases}.
\]
The claim then follows from parts (ii) and (iii) of Lemma~\ref{lem:finite}.
\end{proof}

\begin{lem}
We have the following statements:

\begin{enumerate}
\item $\Ann(\omega_a \we \bA) \cap \Ann(\omega_0 \we \bA) \subseteq \Ann(\omega_a \we A)$.
\item $\Ann(\omega_a \we \bA) = \Ann(\omega_a \we A)+\Ann(\bA)$.
\item  $\Sing(\bF^\ell)=\pi(\Ann(\omega_a \we \bA^{\ell-1}))$.
\end{enumerate}

\label{lem:affine}
\end{lem}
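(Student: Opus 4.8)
The plan is to prove the three statements of Lemma~\ref{lem:affine} in the order given, using the direct-sum decomposition of Lemma~\ref{lem:direct} as the main tool. For part (i), suppose $F \in \Ann(\omega_a \we \bA) \cap \Ann(\omega_0 \we \bA)$ and let $x \in \omega_a \we A^{\ell-1}$, say $x = \omega_a \we y$ with $y \in A^{\ell-1}$. By Lemma~\ref{lem:direct} write $y = \omega_0 \we z + w$ with $z \in \bA^{\ell-2}$ and $w \in \bA^{\ell-1}$. Then $\omega_a \we y = \omega_a \we \omega_0 \we z + \omega_a \we w$. The second term lies in $\omega_a \we \bA$ (here one uses $\omega_a \in \bA^1$, which holds because $\sum a_i = 0$), so $F$ kills it. The first term needs a small argument: $\omega_a \we \omega_0 \we z = -\omega_0 \we \omega_a \we z$, and $\omega_a \we z \in \bA^{\ell-1}$, so $\omega_a \we \omega_0 \we z \in \omega_0 \we \bA^{\ell-1}$, which $F$ also kills. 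Hence $F(x) = 0$, giving $F \in \Ann(\omega_a \we A)$.

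For part (ii), the inclusion $\supseteq$ is immediate: $\Ann(\bA) \subseteq \Ann(\omega_a \we \bA)$ since $\omega_a \we \bA \subseteq \bA$ by the standing hypothesis, and $\Ann(\omega_a \we A) \subseteq \Ann(\omega_a \we \bA)$ trivially. For $\subseteq$, take $F \in \Ann(\omega_a \we \bA^{\ell-1})$. Using the splitting $\F^\ell = \ker(\sigma^*) \oplus \ker(\pi)$ from Lemma~\ref{lem:finite}(i), write $F = F_1 + F_2$ with $F_1 \in \ker(\sigma^*) = \Ann(\omega_0 \we \bA^{\ell-1})$ and $F_2 \in \ker(\pi) = \Ann(\bA)$. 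Since $F_2$ annihilates all of $\bA \supseteq \omega_a \we \bA^{\ell-1}$, we get $F_1 = F - F_2 \in \Ann(\omega_a \we \bA^{\ell-1})$ as well. Now $F_1 \in \Ann(\omega_a \we \bA^{\ell-1}) \cap \Ann(\omega_0 \we \bA^{\ell-1})$, so part (i) gives $F_1 \in \Ann(\omega_a \we A^{\ell-1})$. Thus $F = F_1 + F_2 \in \Ann(\omega_a \we A) + \Ann(\bA)$.

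Part (iii) is then a short deduction. By Definition~\ref{def:sing}, $\Sing(\bF^\ell) = \pi(\Ann(\omega_a \we A^{\ell-1}))$. Since $\ker(\pi) = \Ann(\bA)$, applying $\pi$ to the equality in part (ii) gives $\pi(\Ann(\omega_a \we \bA^{\ell-1})) = \pi(\Ann(\omega_a \we A^{\ell-1}) + \Ann(\bA)) = \pi(\Ann(\omega_a \we A^{\ell-1})) = \Sing(\bF^\ell)$.

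I expect the only delicate point to be the bookkeeping in part (i) — specifically, making sure the decomposition of $y$ via Lemma~\ref{lem:direct} interacts correctly with wedging by $\omega_a$, and that one genuinely needs the hypothesis $\sum_{i=0}^n a_i = 0$ (so that $\omega_a \in \bA^1$) for the term $\omega_a \we w$ to land in $\omega_a \we \bA$. Everything else is formal manipulation of the exact sequences and splittings already established in Lemmas~\ref{lem:partial}, \ref{lem:direct}, and \ref{lem:finite}.
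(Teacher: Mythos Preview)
Your proof is correct and follows essentially the same route as the paper's: decompose $y$ via Lemma~\ref{lem:direct}, use the anticommutation $\omega_a\we\omega_0\we z=-\omega_0\we(\omega_a\we z)$ together with $\omega_a\in\bA^1$ to place each piece in $\omega_0\we\bA$ or $\omega_a\we\bA$, then for (ii) split $F$ via Lemma~\ref{lem:finite}(i) and feed the $\ker(\sigma^*)$ component back into (i). The only cosmetic difference is that you write everything in degree $\ell$ while the paper argues degree-free; since the argument is identical in each graded piece, this is harmless.
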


\begin{proof}  Let $F \in \Ann(\omega_a \we \bA) \cap \Ann(\omega_0 \we \bA)$ and
 $x\in \omega_a \we A$. Write $x=\omega_a \we y$ with $y \in A$. 
 By Lemma~\ref{lem:direct} we can write $y=y_1+y_2$ with
 $y_1 \in \omega_0 \we \bA$ and $y_2 \in \bA$. Write $y_1 = 
 \omega_0 \we y_1'$ with $y_1'\in \bA$. Since $\omega_a \we \bA \subseteq \bA$,
  $\omega_a \we y_1 =\omega_a \we (\omega_0 \we y_1')= 
  \omega_0 \we (-(\omega_a \we y_1')) \in \omega_0 \we \bA$.
   Then $x=\omega_a \we y = \omega_a \we y_1 + \omega_a \we y_2 \in \omega_0 \we \bA +  \omega_a
  \we \bA$. Then $F(x)=0$. This proves (i).

Let $F \in \Ann(\omega_a \we \bA)$. By part (i) of 
Lemma~\ref{lem:finite}, we can write $F=F_1+F_2$ where
 $F_1 \in \ker(\sigma^*)= \Ann(\omega_0 \we \bA)$
 and $F_2 \in \ker(\pi)=\Ann(\bA)$. Since $\omega_a 
 \we \bA \subseteq \bA$, $F_2 \in \Ann(\omega_a \we \bA)$. Then $F_1=F-F_2
 \in \Ann(\omega_a \we \bA)$. Then
 $ F_1 \in \Ann(\omega_a \we \bA)\cap \Ann(\omega_0 \we \bA).$
 Then $F_1 \in \Ann(\omega_a \we A)$ by (i). Thus $F =F_1+F_2 
 \in \Ann(\omega_a \we A)+\Ann(\bA)$. This proves $\Ann(\omega_a \we \bA)
 \subseteq \Ann(\omega_a \we A)+\Ann(\bA)$. The opposite 
 inclusion follows easily from the fact that
 $\omega_a \we \bA \subseteq (\omega_a \we A) \cap \bA$. This proves (ii).

 Part (iii) follows immediately from (ii).
\end{proof}

We note the following consequence of part (iii) of
 Lemma~\ref{lem:affine} for later use. As observed 
 earlier, $(\bA,\delta_a)$ is a subcomplex  of $(A,\delta_a)$.

\begin{cor}  The inclusion $\Sing(\bF^\ell)
 \hookrightarrow \bF^\ell=(\bA^\ell)^*$ induces an isomorphism
\[
\xymatrix@1{
\Sing(\bF^\ell) \ar[r]^-{\cong} &(H^\ell(\bA,\delta_a))^*.
}
\]
\label{cor:aomoto}
\end{cor}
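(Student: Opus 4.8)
The plan is to deduce the corollary from part (iii) of Lemma~\ref{lem:affine} together with the identification of $\bF^\ell$ with $(\bA^\ell)^*$ from Theorem~\ref{prop:isom}. The key observation is that under the isomorphism $\bar\phi^*\colon\bF^\ell\to(\bA^\ell)^*$, the subspace $\Sing(\bF^\ell)=\pi(\Ann(\omega_a\we\bA^{\ell-1}))$ should correspond exactly to the annihilator of $\omega_a\we\bA^{\ell-1}$ inside $(\bA^\ell)^*$, and that annihilator is precisely $(\bA^\ell/(\omega_a\we\bA^{\ell-1}))^*=(H^\ell(\bA,\delta_a))^*$, since $\delta_a\colon\bA^{\ell-1}\to\bA^\ell$ is the top differential and there is nothing in degree $\ell+1$.

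First I would spell out the commutative diagram of Theorem~\ref{prop:isom}: $\bar\phi^*\circ\pi=\rho\circ\phi^*$, where $\rho\colon A^*\to\bA^*$ is restriction along $\iota\colon\bA\hookrightarrow A$. Take $F\in\Ann(\omega_a\we\bA^{\ell-1})\subseteq\F^\ell$ (note $\omega_a\we\bA^{\ell-1}\subseteq\bA^\ell$ by the standing hypothesis $\sum a_i=0$, which is used here). Then $\phi^*(F)\in A^*$ restricts to an element of $\bA^*$ annihilating $\omega_a\we\bA^{\ell-1}$; conversely every linear functional on $\bA^\ell$ killing $\omega_a\we\bA^{\ell-1}$ extends to $A^\ell$ (split the inclusion using Lemma~\ref{lem:direct}) and hence comes from some such $F$. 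So $\bar\phi^*$ carries $\pi(\Ann(\omega_a\we\bA^{\ell-1}))$ onto $\{\xi\in(\bA^\ell)^*\mid \xi(\omega_a\we\bA^{\ell-1})=0\}$. By Lemma~\ref{lem:affine}(iii) the left-hand side is exactly $\Sing(\bF^\ell)$, so the inclusion $\Sing(\bF^\ell)\hookrightarrow\bF^\ell=(\bA^\ell)^*$ has image precisely this annihilator.

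It then remains to identify that annihilator with $(H^\ell(\bA,\delta_a))^*$. Since $\bA$ is concentrated in degrees $0,\dots,\ell$ (it sits inside $A$, which has top degree $\ell+1$, but $\bA^{\ell+1}=\ker(\partial\colon A^{\ell+1}\to A^\ell)$; one should check, or cite, that this vanishes — e.g. from Corollary~\ref{cor:deg1} and the fact that $\codim$ in $\P^\ell$ is at most $\ell$, so $\bA^{\ell+1}=0$), we have $H^\ell(\bA,\delta_a)=\bA^\ell/(\omega_a\we\bA^{\ell-1})$, and the standard fact that the annihilator of a subspace $W\subseteq V$ in $V^*$ is canonically $(V/W)^*$ finishes the argument. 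Assembling: $\Sing(\bF^\ell)\cong\{\xi\in(\bA^\ell)^*\mid\xi|_{\omega_a\we\bA^{\ell-1}}=0\}\cong(\bA^\ell/(\omega_a\we\bA^{\ell-1}))^*=(H^\ell(\bA,\delta_a))^*$.

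I expect the only subtle point to be the bookkeeping in the middle paragraph: making sure that $\pi$ restricted to $\Ann(\omega_a\we\bA^{\ell-1})$ really surjects onto the full annihilator in $(\bA^\ell)^*$ and does not lose anything, which is where part (i) of Lemma~\ref{lem:finite} (the splitting $\F^\ell=\ker(\sigma^*)\oplus\ker(\pi)$) and Lemma~\ref{lem:direct} are needed to lift functionals from $\bA^\ell$ to $A^\ell$. Everything else is formal duality. One could alternatively phrase the whole proof by dualizing the right-exact sequence $\bA^{\ell-1}\xrightarrow{\delta_a}\bA^\ell\to H^\ell(\bA,\delta_a)\to0$ and matching it against Lemma~\ref{lem:affine}(iii), but the diagram-chase version above is the cleanest to write.
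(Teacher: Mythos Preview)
Your argument is correct and follows essentially the same route as the paper: identify $\Sing(\bF^\ell)$ via Lemma~\ref{lem:affine}(iii) with the annihilator of $\omega_a\we\bA^{\ell-1}$ inside $(\bA^\ell)^*$, note $\bA^{\ell+1}=0$ so that $H^\ell(\bA,\delta_a)=\bA^\ell/(\omega_a\we\bA^{\ell-1})$, and finish with the standard annihilator/quotient duality. The paper is slightly more economical in two places: it gets $\bA^{\ell+1}=0$ directly from Lemma~\ref{lem:partial} rather than Corollary~\ref{cor:deg1}, and it handles the surjectivity you flag as ``the only subtle point'' simply by writing the annihilator in $(\bA^\ell)^*$ as $(\Ann(\omega_a\we\bA^{\ell-1})+\Ann(\bA^\ell))/\Ann(\bA^\ell)$ and invoking Proposition~\ref{prop:im}---the splittings from Lemmas~\ref{lem:direct} and~\ref{lem:finite} are not needed, since restriction of linear functionals along an inclusion is always surjective.
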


\begin{proof} Lemma~\ref{lem:partial} implies $\bA^{\ell+1}=0$, so
 $H^\ell(\bA,\delta_a)=\bA^\ell/(\omega_a \we \bA^{\ell-1})$.
 Then $(H^\ell(\bA,\delta_a))^*$ is isomorphic to the annihilator of $\omega_a \we \bA^{\ell-1}$ in $(\bA^\ell)^*$. This annihilator is equal to 
 \[
 (\Ann(\omega_a \we \bA^{\ell-1}) + \Ann(\bA^\ell))/\Ann(\bA^\ell).
 \]
 By Definition~\ref{def:sing}, Proposition~\ref{prop:im}, and Lemma~\ref{lem:affine}(iii), this is equal to $\Sing(\bF^\ell)$.
\end{proof}
\begin{proofof}{Theorem~\ref{thm:decone}} Let $F \in \Ann(\omega_a \we A^{\ell-1})$,
and let $\hat{x}\in \hat{\omega}_a \we \hat{A}^{\ell-1}$. Then
 $\epsilon^*(\bar{F})(\hat{x})=F(\epsilon(\hat{x}))$.
 Since $\epsilon(\hat{\omega}_{\hat{a}})=\omega_a$,
  $\epsilon(\hat{x}) \in \omega_a \we A^{\ell-1}$, so
 $F(\epsilon(\hat{x}))=0$. Then $\epsilon^*(\bar{F})(\hat{x})=0$. Thus $\epsilon^*(\Sing(\bF^\ell))
 \subseteq \Sing(\hat{\F}^\ell)$.

 Conversely, suppose $\hat{F}
 \in\Sing(\hat{\F}^\ell)$. Write $\hat{F}=\epsilon^*(\bar{F})$ with
 $F \in \F^\ell$. Let $x \in \omega_a \we \bA^{\ell-1}$.
  Then $x \in \bA^\ell$, so $x=\epsilon(\hat{x})$ for some
   $\hat{x} \in \hat{A}$. Since $x \in \omega_a \we \bA^{\ell-1}$,
    $\hat{x}\in \hat{\omega}_a \we \hat{A}^{\ell-1}$. Then
$\hat{F}(\hat{x})=0$ by definition of $\Sing(\hat{\F}^\ell)$. 
Then $F(x)=F(\epsilon(\hat{x}))=\epsilon^*(\bar{F})(\hat{x})=\hat{F}(\hat{x})=0$.
This shows that $F \in \Ann(\omega_a \we \bA^{\ell-1})$. 
Then $F \in \Ann(\omega_a \we A^{\ell-1})$ by part (iii) of Lemma~\ref{lem:affine}.
 Then $\bar{F}\in \Sing(\bF^\ell)$ by definition of $\Sing(\bF^\ell)$. Thus $\Sing(\hat{\F}^\ell)
\subseteq \epsilon^*(\Sing(\bF^\ell))$, and $\epsilon^*$ restricts to
    an isomorphism $\Sing(\bF^\ell) \to \Sing(\hat{\F}^\ell)$.

It remains to prove that $\hat{S}_\ell(\epsilon^*(\bar{F}),\epsilon^*(\bar{F}'))=\bar{S}_\ell(\bar{F},\bar{F}')$
for all $\bar{F},\bar{F}' \in \Sing(\bF^\ell)$.  
By \eqref{eqn:hat}, we have
\bea
\hat{S}_\ell(\epsilon^*(\bar{F}),\epsilon^*(\bar{F}')) &=& \sum_{\hat{J}} \hat{a}_{\hat{J}} \epsilon^*(\bar{F})(\hat{\omega}_{\hat{J}})\epsilon^*(\bar{F}')(\hat{\omega}_{\hat{J}})
=
\\
&=&
\sum_{\hat{J}} a_{\hat{J}} F(\epsilon(\hat{\omega}_{\hat{J}}))F'(\epsilon(\hat{\omega}_{\hat{J}}))
= \sum_{\hat{J}} a_{\hat{J}} F(\partial \omega_{(0,\hat{J})})F'(\partial \omega_{(0,\hat{J})}).
\eea
The sum is over increasing $p$-tuples $\hat{J}$ of elements of $\{1, \ldots, n\}$. By parts (ii) and (iii)
of Lemma~\ref{lem:finite}, we may assume that $F,F' \in \Ann(\omega_0 \we A^{\ell-1})$. Since
$
\partial \omega_{(0,\hat{J})}=\omega_{\hat{J}}\, - \, \omega_0 \we \partial \omega_{\hat{J}},
$
 this implies $F(\partial \omega_{(0,\hat{J})}) = F(\omega_{\hat{J}})$ 
 and similarly for $F'.$ Then the last sum above is equal to
$
\sum_{\hat{J}} a_{\hat{J}} F(\omega_{\hat{J}})F'(\omega_{\hat{J}}).
$
This sum is equal to
 $
 \sum_J a_J F(\omega_J)F'(\omega_J),
 $
  summing  now over all increasing $p$-tuples $J$ of elements of $\{0, \ldots, n\}$,
  again because $F, F' \in \Ann(\omega_0 \we A^{\ell-1})$.
   This equals $\bar{S}_\ell(\bar{F},\bar{F'})$ by definition.
\end{proofof}

\medskip

We close this section with a topological remark.
Consider the (multi-valued) master function $\Phi_a=\prod_{i=0}^n f_i^{\,-a_i}$
on $\C^{\ell+1}$. Since $\sum_{i=0}^n a_i=0$, $\Phi_a$ is invariant under the
action of $\C^\times$, hence induces a (multi-valued) master function
$\bar{\Phi}_a$ on \bM. We have $\bar{\Phi}_a=\hat{\Phi}_{\hat{a}}\circ h$
where  $\hat{\Phi}_{\hat{a}}=\prod_{i=1}^n \hat{f}_i^{\,-a_i}$ is
the master function of $(\dA,\hat{a})$ on $\hat{M}$, and
 $h \colon \bM \to \hat{M}$ is the canonical diffeomorphism.
 The associated rank-one local systems $\hat{\L}_{\hat{a}}$ on 
 $\hat M$ and $\bar{\L}_{{a}}$ on $\bar M$ then satisfy
 $h^*\hat{\L}_{\hat{a}}=\bar{\L}_a$.  The inclusion of
 $(\bA,\delta_{ca})$ in the twisted algebraic de Rham complex
 of $\bar{\L}_{ca}$ induces an isomorphism of $H^*(\bA,\delta_{ca})$
  with $H^*(\bM,\bar{\L}_{ca})$ for generic $c$.
As before, $\Sing_a(\bF^\ell)$ is equal to $\Sing_{ca}(\bF^\ell)$
for any nonzero scalar $c$. Then, by Corollary~\ref{cor:aomoto},
 we have the following corollary.

\begin{cor} For generic $c$, the inclusion $\Sing_a(\bF^\ell) 
\hookrightarrow (\bA^\ell)^*$ induces an isomorphism
\[
\xymatrix@1{
\Sing_a(\bF^\ell) \ar[r]^-{\cong} &H_\ell(\bM, \bar{\L}_{-ca}).
}
\]
\end{cor}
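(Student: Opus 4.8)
The plan is to assemble the isomorphism by stringing together three identifications that have essentially already been established in the excerpt. First, by Corollary~\ref{cor:aomoto}, the inclusion $\Sing_a(\bF^\ell) \hookrightarrow (\bA^\ell)^*$ induces an isomorphism $\Sing_a(\bF^\ell) \cong (H^\ell(\bA,\delta_a))^*$; replacing $a$ by $ca$ for nonzero $c$ changes nothing on the left, since $\omega_{ca}\we\bA^{\ell-1}=\omega_a\we\bA^{\ell-1}$, so $\Sing_a(\bF^\ell)=\Sing_{ca}(\bF^\ell)$, and hence $\Sing_a(\bF^\ell)\cong(H^\ell(\bA,\delta_{ca}))^*$ for every nonzero $c$. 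Second, I would invoke the comparison isomorphism recalled just above the corollary: for generic $c$, the inclusion of $(\bA,\delta_{ca})$ into the twisted algebraic de Rham complex of $\bar{\L}_{ca}$ induces $H^\ell(\bA,\delta_{ca})\cong H^\ell(\bM,\bar{\L}_{ca})$. Dualizing gives $(H^\ell(\bA,\delta_{ca}))^*\cong (H^\ell(\bM,\bar{\L}_{ca}))^*$.

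The third step is the identification of $(H^\ell(\bM,\bar{\L}_{ca}))^*$ with the local system homology $H_\ell(\bM,\bar{\L}_{-ca})$. This is the standard Poincar\'e-type duality / universal coefficients statement for twisted (co)homology on the complement of a hyperplane arrangement: twisted cohomology with coefficients in a rank-one local system is dual to twisted homology with coefficients in the inverse (dual) local system. Concretely, $H^\ell(\bM,\bar{\L}_{ca})$ and $H_\ell(\bM,\bar{\L}_{-ca})$ are paired perfectly by evaluation of cocycles on cycles, using that $\bar{\L}_{-ca}$ is the local system dual to $\bar{\L}_{ca}$ (the defining character of $\bar{\L}_{-ca}$ is the inverse of that of $\bar{\L}_{ca}$, which is exactly the $-a$ versus $a$ in the exponents of the master function $\bar{\Phi}_a=\prod f_i^{-a_i}$). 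Chaining the three isomorphisms, $\Sing_a(\bF^\ell)\cong(H^\ell(\bA,\delta_{ca}))^*\cong(H^\ell(\bM,\bar{\L}_{ca}))^*\cong H_\ell(\bM,\bar{\L}_{-ca})$, yields the claim; one should check that the composite is induced by the inclusion $\Sing_a(\bF^\ell)\hookrightarrow(\bA^\ell)^*$ together with the de Rham comparison, which is automatic from the way each map was constructed.

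The main obstacle — really the only nontrivial input not already packaged in the excerpt — is the third step: the perfect pairing between $H^\ell(\bM,\bar{\L}_{ca})$ and $H_\ell(\bM,\bar{\L}_{-ca})$. Strictly this is not proved here, but it is entirely standard for arrangement complements (see, e.g., the treatment of twisted (co)homology in \cite{SV91} and the references there, where the analogous statement for the affine complement $M$ is exactly what underlies the remark in \S\ref{sec:intro} that $\Sing_a(\F^\ell)$ is isomorphic to $H_\ell(M,\L_{-ca})$ for generic $c$). Since the master function factors as $\bar{\Phi}_a=\hat{\Phi}_{\hat a}\circ h$ through the canonical diffeomorphism $h\colon\bM\to\hat M$, one could alternatively transport the already-cited affine fact $\Sing_{\hat a}(\hat\F^\ell)\cong H_\ell(\hat M,\hat{\L}_{-\hat a})$ across $h$ and across the isometry $\epsilon^*$ of Theorem~\ref{thm:decone}; this gives the same conclusion and makes clear that no genuinely new analysis is needed. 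I would present the proof in the first form (three isomorphisms) with a one-line pointer to the duality reference, as that keeps the argument self-contained relative to the results and citations already in the paper.
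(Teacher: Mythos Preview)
Your proposal is correct and matches the paper's argument, which is entirely implicit: the paper simply says ``by Corollary~\ref{cor:aomoto}'' after noting $\Sing_a(\bF^\ell)=\Sing_{ca}(\bF^\ell)$ and recording the de Rham comparison $H^*(\bA,\delta_{ca})\cong H^*(\bM,\bar{\L}_{ca})$ for generic $c$. You have simply unpacked the three steps, including the tacit passage from $(H^\ell)^*$ to $H_\ell$ with the dual local system, exactly as the paper intends.
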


This isomorphism does not involve the choice of a hyperplane at infinity.
Thus we have the following commutative diagram of isomorphisms, for
generic $c$, in which the index $0$ again plays no special role:
\[
\xymatrix
{
\Sing_a(\bF^\ell) \ar[r]^-{\epsilon^*} \ar[d] & \Sing_{\hat{a}}(\hat{\F}^\ell) \ar[d] \\
H_\ell(\bM, \bar{\L}_{-ca}) \ar[r]^-{h_\ast} & H_\ell(\hat{M},\hat{\L}_{-c\hat{a}})
}
\]

\section{Transition functions}
\noindent
\medskip
The right-hand side of the formula in Corollary~\ref{cor:vertex} is the
decomposition of the flag space $\hat{\F}^p$ of the decone \dA, see \cite{SV91}.
It can be considered to be the dehomogenization of the projective flag
space $\bF$ relative to $H_0$. The dehomogenizations relative to
different hyperplanes form a set of ``affine charts" for \bF. We compute
the transition functions.

For $0\leq j\leq n$, let $\hat{A}_j, \hat{\F}_j,$ and $\hat{S}^{(j)}$
denote the OS algebra, flag complex, and contravariant form of the affine
arrangement obtained by deconing \A\ with respect to $H_j$. Let
$\epsilon_j \colon \hat{A}_j \to \bA$ be the isomorphism determined
by $\epsilon(\hat{\omega}_k)=\omega_k - \omega_j$, for $0\leq k\leq n$
and $k\neq j$, as in Lemma~\ref{lem:eps}. Let $\epsilon_j^* \colon \bF \to
\hat{\F}_j$ be the adjoint of $\epsilon_j$. For $0\leq i<j\leq n$, set
 $\tau_{ij} = \epsilon_j^*\circ (\epsilon_i^*)^{-1}$. Then
 $\tau_{ij} \colon \hat{\F}_i \to \hat{\F_j}$
is an isomorphism. Theorem~\ref{thm:decone} has the following corollary.

\begin{cor} The restriction of $\tau_{ij}$ is an isomorphism of
 inner product spaces
\[
\xymatrix@1{
\tau_{ij} \colon (\Sing(\hat{\F}_i^\ell),\hat{S}^{(i)}_\ell\vert_{\Sing(\hat{\F}_i^\ell)})\ar[r]^-\cong & \ (\Sing(\hat{\F}_j^\ell),\hat{S}^{(j)}_\ell\vert_{\Sing(\hat{\F}_j^\ell)}).
}
\]
\label{cor:affisom}
\end{cor}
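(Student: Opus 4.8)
The plan is to obtain this corollary directly by composing the isomorphism of Theorem~\ref{thm:decone} with itself, using the fact that $\tau_{ij}=\epsilon_j^*\circ(\epsilon_i^*)^{-1}$ factors through the intermediate projective object $\bF$. First I would invoke Theorem~\ref{thm:decone} twice: applied with $H_i$ as the chosen hyperplane at infinity (in place of $H_0$), it gives that $\epsilon_i^*\colon\bF\to\hat\F_i$ restricts to an isomorphism of inner-product spaces
\[
(\Sing(\bF^\ell),\bS_\ell\vert_{\Sing(\bF^\ell)})\ \xrightarrow{\ \cong\ }\ (\Sing(\hat\F_i^\ell),\hat S^{(i)}_\ell\vert_{\Sing(\hat\F_i^\ell)}),
\]
and applied with $H_j$ it gives the analogous statement for $\epsilon_j^*$. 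Here one uses the remark, stated after Lemma~\ref{lem:affine}, that the index $0$ plays no special role in the arguments of Section~\ref{sec:decone}; the only hypothesis needed, $\sum_{i=0}^n a_i=0$, is symmetric in the indices.

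The second step is to observe that since $\epsilon_i^*$ carries $\Sing(\bF^\ell)$ isomorphically onto $\Sing(\hat\F_i^\ell)$, its inverse $(\epsilon_i^*)^{-1}$ carries $\Sing(\hat\F_i^\ell)$ back onto $\Sing(\bF^\ell)$, and is an isometry for the restricted forms. Composing with the isometry $\epsilon_j^*\vert_{\Sing(\bF^\ell)}$ yields precisely that $\tau_{ij}=\epsilon_j^*\circ(\epsilon_i^*)^{-1}$ restricts to an isometric isomorphism $\Sing(\hat\F_i^\ell)\to\Sing(\hat\F_j^\ell)$, which is the assertion. No new computation is required: everything is formal once Theorem~\ref{thm:decone} is available in the symmetric form.

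The one point that needs a sentence of justification is that Theorem~\ref{thm:decone} as proved is literally stated only for $H_0$, so I would explicitly note that replacing $0$ by an arbitrary index $j$ throughout Section~\ref{sec:decone} is legitimate — all the intermediate lemmas (Lemmas~\ref{lem:eps}, \ref{lem:direct}, \ref{lem:finite}, \ref{lem:affine}) and their proofs only use that $H_j\in\A$ and that the weights sum to zero. This is the mild obstacle, and it is purely a matter of bookkeeping rather than mathematical content. With that remark in place, the proof is the two-line composition above.
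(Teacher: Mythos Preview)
Your proposal is correct and matches the paper's intended argument: the paper states this result as an immediate corollary of Theorem~\ref{thm:decone} with no written proof, and the derivation you give---apply Theorem~\ref{thm:decone} once for $H_i$ and once for $H_j$, then compose---is exactly the implied reasoning. Your remark about the index $0$ playing no special role is already built into the paper's setup at the start of Section~\ref{sec:decone}, so even that bookkeeping point requires no additional justification.
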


\medskip
According to Corollary~\ref{cor:vertex}, $\tau_{ij}$ can be
considered to be an isomorphism
\[
\tau_{ij} \colon \bigoplus_{\substack{\codim(X)=p \\ H_i \not
\leq X}} \F^p_X \ \ \longrightarrow \bigoplus_{\substack{\codim(X)=p \\ H_j \not \leq X}} \F^p_X.
\]
We describe this map explicitly.

In the special case $p=1$ there is an easy formula for
$\tau_{ij}$. Let $\{F_0, \ldots, F_n\}$ be the canonical
basis of $\F^1$, and suppose $k \neq i$. Then

\[
\tau_{ij}(F_k)=\begin{cases} F_k \ \ \text{if} \ \ k \neq j\\
-\sum_{r\neq j} F_r \ \ \text{if} \ \ k = j.
\end{cases}.
\]
To describe the general formula, we will use the following lemma.

\begin{lem} Let $X \in L$ with $\codim(X)=p$, and let $H \in \A$.
 Then $\F^p_X$ is spanned by elements $[X_0< \cdots < X_{p-1}<X]$ 
  satisfying $H\not\leq X_{p-1}$.
\label{lem:span}
\end{lem}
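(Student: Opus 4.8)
The plan is to prove the following sharper statement: modulo the relations \eqref{egn:gap} defining $\F$, every flag $[X_0<\cdots<X_{p-1}<X]$ terminating at $X$ is a $\C$-linear combination of flags terminating at $X$ that satisfy $H\not\leq X_{p-1}$; call such flags \emph{good}. Since $\F^p_X$ is spanned by the images of all flags terminating at $X$, this implies the lemma. We may assume $p\geq 1$. Observe first that $X_{p-1}\geq X$ in $L$, so $H\leq X_{p-1}$ forces $H\leq X$; hence if $H\not\leq X$ there is nothing to prove, and we may assume $H\leq X$. For a flag $F=[X_0<\cdots<X_p]$ with $X_p=X$, let $m(F)$ be the largest index $j$ with $H\not\leq X_j$. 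Since $X_0=\C^{\ell+1}$ and $\codim(H)=1$, we have $H\not\leq X_0$, so $m(F)$ is well defined; and $0\leq m(F)\leq p-1$ because $H\leq X_p$. The flag $F$ is good exactly when $m(F)=p-1$.

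I will induct on the defect $p-1-m(F)$. If it is zero, $F$ is good. Otherwise put $m=m(F)\leq p-2$, so $H\not\leq X_m$ while $H\leq X_{m+1}$. Deleting $X_{m+1}$ from $F$ and applying the relation \eqref{egn:gap} for the resulting chain $(X_0<\cdots<X_m<X_{m+2}<\cdots<X_p)$ gives
\[
\sum_{X_m<Y<X_{m+2}}[X_0<\cdots<X_m<Y<X_{m+2}<\cdots<X_p]=0\quad\text{in }\F.
\]
The term $Y=X_{m+1}$ occurs, so $F$ equals $-1$ times the sum of the remaining terms.

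The crux is to show that every $Y$ in the open interval $(X_m,X_{m+2})$ of $L$ with $Y\neq X_{m+1}$ satisfies $H\not\leq Y$. Since $L$ is graded by codimension, every such $Y$ has $\codim(Y)=m+1$. If moreover $H\leq Y$, i.e.\ $Y\subseteq H$ as subspaces, then $Y\subseteq X_m\cap H$; but $H\not\leq X_m$ means $X_m\not\subseteq H$, so $X_m\cap H$ is a flat of $L$ of codimension $m+1$, and comparing codimensions forces $Y=X_m\cap H$. Thus $X_m\cap H$ is the only element of the interval lying below $H$, and since $X_{m+1}$ is one, $X_m\cap H=X_{m+1}$. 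Consequently each flag $F'=[X_0<\cdots<X_m<Y<X_{m+2}<\cdots<X_p]$ with $Y\neq X_{m+1}$ appearing on the right-hand side has $H\not\leq X_m$, $H\not\leq Y$, and $H\leq X_{m+2},\dots,H\leq X_p$, so $m(F')=m+1$; it terminates at $X$ and has smaller defect. By the induction hypothesis each $F'$ is a combination of good flags terminating at $X$, hence so is $F$. This closes the induction, and the lemma follows.

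The main obstacle is the rank-two interval computation in the last paragraph — specifically the identity $\codim(X_m\cap H)=m+1$, which is what guarantees that at most one flat of $(X_m,X_{m+2})$ lies below $H$ and hence that the deletion relation strictly raises $m(F)$. The remaining work is routine bookkeeping with the defining relations of $\F$ and with the defect $p-1-m(F)$.
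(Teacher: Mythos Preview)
Your proof is correct and follows essentially the same approach as the paper's: both use the defining relation \eqref{egn:gap} of $\F$ to rewrite a flag, together with the rank-two interval observation that $X_m\cap H$ is the unique flat of codimension $m+1$ lying below both $X_m$ and $H$. The only difference is bookkeeping---the paper inducts on $p$ (first arranging $H\not\leq X_{p-2}$ via the inductive hypothesis applied to the truncated flag, then rewriting once at position $p-1$), whereas you induct directly on the defect $p-1-m(F)$ and rewrite at position $m+1$.
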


\begin{proof} We induct on $p$, the case $p=0$ being trivial. 
Let $p>0$ and $[X_0< \cdots <X_{p-1}<X] \in \F^p$. By the
 inductive hypothesis, we may assume $H\not \leq X_{p-2}$. (Here we rely on the fact that the assignment $[X_0< \cdots < X_{p-1}] \mapsto [X_0 < \cdots < X_{p-1}<X_p]$ determines a well-defined linear map $\F_{X_{p-1}}^{p-1} \to \F_{X_p}^p$.) If $H \not \leq X_{p-1}$ we are done.
  Otherwise, by \eqref{eqn:gap}, we have
\[
[X_0< \cdots <X_{p-1}<X]=\sum_{\substack{X_{p-2}<X'<X \\
 X'\neq X_{p-1}}} - [X_0 < \cdots X_{p-2}< X'< X].
\]
Since $H\not \leq X_{p-2}$ and $H\leq X_{p-1}$, $H\not \leq X'$ 
for any $X'\neq X$ satisfying $X_{p-2}<X'$ and $\codim(X')=p-1$. 
Then every flag $(X_0 < \cdots X_{p-2}< X'< X)$ that appears 
on the right-hand side satisfies the required condition. 
This completes the inductive step.
\end{proof}

\begin{thm} Let $[X_0< \cdots < X_p] \in \F^p$ with
 $H_i\not \leq X_p$. If $ \ H_j \not \leq X_p$, then
\bea
\tau_{ij}([X_0< \cdots < X_p]) = [X_0< \cdots < X_p].
\eea
 If $ H_j \leq X_p$ and  $H_j \not \leq X_{p-1}$, then 
\bea
\tau_{ij}([X_0< \cdots < X_p])= \sum_{X_{p-1}<X', X'\neq X_p} - [X_0 < \cdots < X_{p-1}< X'].
\eea

\end{thm}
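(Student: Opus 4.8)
The plan is to reduce the statement to the behavior of $\epsilon_i$ and $\epsilon_j$ on the pairing $\langle\,,\,\rangle$, and to Lemma~\ref{lem:span}, which lets us choose convenient flag representatives. Recall $\tau_{ij}=\epsilon_j^*\circ(\epsilon_i^*)^{-1}$, so for $\bar F\in\bF$ we have $\epsilon_i^*(\bar F)=\hat F$ in $\hat\F_i$ and $\tau_{ij}(\hat F)=\epsilon_j^*(\bar F)$ in $\hat\F_j$. Equivalently, since $\bF=(\bA)^*$ via $\bar\phi^*$, the map $\tau_{ij}$ is dual to $\epsilon_i\circ\epsilon_j^{-1}\colon\bA\to\bA$. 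So the first move is: translate the claimed formulas into statements about how the functional $\langle[X_0<\cdots<X_p],-\rangle$ on $\bA$ transforms under precomposition with $\epsilon_i\circ\epsilon_j^{-1}$. Concretely, a flag $F=[X_0<\cdots<X_p]$ with $H_i\not\le X_p$ lies (by the observation in the proof of Corollary~\ref{cor:vertex}, with $H_0$ replaced by $H_i$) in $\Ann(\omega_i\we A^{p-1})$, and by Lemma~\ref{lem:finite}(ii)--(iii) such flags give exactly the image of $\hat\F_i^p$ in $\bF^p$; this is what makes $\tau_{ij}$ well-defined on the indicated direct summands.

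First I would treat the case $H_j\not\le X_p$. Here the flag $F$ lies in both $\Ann(\omega_i\we A^{p-1})$ and $\Ann(\omega_j\we A^{p-1})$. Using Lemma~\ref{lem:span}, pick a representative so that the flag $\xi(\hat J)$ for an increasing $p$-tuple $\hat J\subset\{0,\dots,n\}\minus\{i\}$ equals $F$; the point is that on $\epsilon_i(\hat\omega_{\hat J})=\partial\omega_{(i,\hat J)}=\omega_{\hat J}-\omega_i\we\partial\omega_{\hat J}$ (the analogue of \eqref{eqn:hat}), the $\omega_i$-term is killed because $F\in\Ann(\omega_i\we A^{p-1})$, so $\langle F,\epsilon_i(\hat\omega_{\hat J})\rangle=\langle F,\omega_{\hat J}\rangle$. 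The same identity holds with $i$ replaced by $j$ since also $F\in\Ann(\omega_j\we A^{p-1})$. Therefore $\epsilon_i^*(\bar F)$ and $\epsilon_j^*(\bar F)$ are represented by the same flag $F$ in the two charts, which is exactly the assertion $\tau_{ij}([X_0<\cdots<X_p])=[X_0<\cdots<X_p]$ (interpreting both sides via the canonical flag bases of $\hat\F_i$ and $\hat\F_j$ provided by Corollary~\ref{cor:vertex}).

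Next, the case $H_j\le X_p$ but $H_j\not\le X_{p-1}$. Now $F\notin\Ann(\omega_j\we A^{p-1})$, so $\epsilon_j^*(\bar F)$ is not represented by $F$ itself; I must rewrite $\bar F$ using a flag that does not terminate at a subspace above $H_j$. The tool is relation \eqref{eqn:gap} applied at the top step: since $H_j\not\le X_{p-1}$ we have, in $\F^p$,
\[
[X_0<\cdots<X_{p-1}<X_p] \;=\; -\sum_{\substack{X_{p-1}<X'\\ X'\neq X_p}} [X_0<\cdots<X_{p-1}<X'],
\]
where the sum ranges over $X'$ of codimension $p$ covering $X_{p-1}$ other than $X_p$. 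For each such $X'$, $H_j\not\le X_{p-1}$ forces $H_j\not\le X'$ unless... in fact $H_j\le X'$ would give $X'\ge X_p$-type behavior; one checks using $H_j\not\le X_{p-1}$ and $\codim X'=p=\codim(X_{p-1})+1$ that $X'$ with $H_j\le X'$ is impossible except possibly reproducing constraints already handled — so every term on the right lies in $\Ann(\omega_j\we A^{p-1})$, hence is in the chart $\hat\F_j$. Applying the first case to each of these terms, $\tau_{ij}$ acts on them trivially, giving the stated formula. The $p=1$ formula for $\tau_{ij}(F_k)$ is the instance $X_{p-1}=X_0=\C^{\ell+1}$.

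The main obstacle I anticipate is bookkeeping the well-definedness on the two different canonical bases: $\tau_{ij}$ is a priori a map $\hat\F_i\to\hat\F_j$, and both the source flag $[X_0<\cdots<X_p]$ and its image must be read inside those respective affine flag spaces via Corollary~\ref{cor:vertex}, not naively inside $\F$; one must be careful that the representative chosen via Lemma~\ref{lem:span} (so that $H_i\not\le X_{p-1}$, resp.\ $H_j\not\le X_{p-1}$) is available simultaneously with the other hypotheses, and that the rewriting step \eqref{eqn:gap} does not disturb the lower part $X_0<\cdots<X_{p-1}$ of the flag. Everything else is a direct computation with the pairing \eqref{eqn:pairing}, the identity $\epsilon_k(\hat\omega_{\hat J})=\partial\omega_{(k,\hat J)}=\omega_{\hat J}-\omega_k\we\partial\omega_{\hat J}$, and the annihilator characterizations from Lemma~\ref{lem:finite}.
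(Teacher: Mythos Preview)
Your overall strategy coincides with the paper's: characterize $\tau_{ij}(F)$ as the unique element of $\bigoplus_{H_j\not\le X}\F^p_X$ representing the same class as $F$ in $\bF^p$, then check that the displayed right-hand side has both properties. The paper does this in two sentences, citing relation \eqref{eqn:top} and the argument of Lemma~\ref{lem:span}.

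There is, however, a genuine slip in your second case. The identity
\[
[X_0<\cdots<X_{p-1}<X_p]\;=\;-\sum_{\substack{X_{p-1}<X'\\X'\neq X_p}}[X_0<\cdots<X_{p-1}<X']
\]
does \emph{not} hold in $\F^p$, and it is not an instance of \eqref{eqn:gap}: the gap relations require a fixed element $X_{p+1}$ above the varying slot, which is absent here. What you need is relation \eqref{eqn:top}, the vanishing of $d[X_0<\cdots<X_{p-1}]$ in $\bF^p$; that is precisely the extra relation distinguishing $\bF$ from $\F$. Once you make this correction the rest is fine: since $H_j\not\le X_{p-1}$ and $H_j\le X_p$, the unique codimension-$p$ element above $X_{p-1}$ lying below $H_j$ is $X_p$ itself, so every $X'\neq X_p$ on the right satisfies $H_j\not\le X'$, and the sum lies in the $H_j$-chart. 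Your final sentence ``applying the first case to each of these terms, $\tau_{ij}$ acts on them trivially'' is also off, because those terms need not lie in the $H_i$-chart (so $\tau_{ij}$ is not a priori defined on them); but this step is unnecessary anyway, since once the right-hand side is known to lie in the $H_j$-chart and to equal $\bar F$ in $\bF^p$, it \emph{is} $\tau_{ij}(F)$ by definition.
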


\begin{proof} By definition, $\tau_{ij}([X_0< \cdots < X_p])$ 
is the unique element of  $\bigoplus_{\substack{\codim(X)=p \\ 
H_j \not \leq X}} \F^p_X$ that represents the same element of 
$\bF^p$ as $[X_0< \cdots < X_p]$. By \eqref{eqn:top}, the 
right-hand side represents the same element of $\bF^p$ as 
$[X_0< \cdots < X_p]$, in either case. An argument similar 
to the one used in the preceding lemma shows that the 
right-hand side lies in
\(
\bigoplus_{\substack{\codim(X)=p \\ H_j \not \leq X}} \F^p_X
\)
in either case. The claim follows.
\end{proof}

By Lemma~\ref{lem:span}, this theorem is sufficient to 
determine $\tau_{ij}$ uniquely. By Corollary~\ref{cor:affisom}, 
$\tau_{ij}$ sends singular vectors of $\hat{\F}_i^\ell$ to 
singular vectors of  $\hat{\F}_j^\ell$, and preserves the value 
of the contravariant form on such vectors.

\medskip
Similarly, there is an algebra isomorphism $\tau_{ji}^* \colon
\hat{A}_i \to \hat{A}_j$ determined by
\[
\tau_{ji}^*(\hat{\omega}_k)=\begin{cases} \hat{\omega}_k -
\hat{\omega}_i \ \ \text{if} \ k \neq i \\
-\hat{\omega}_i \ \ \text{if} \ k=i.
\end{cases}
\]
As in \S \ref{sec:flag}, there is an isomorphism $\hat{\F}_i^*
\to \hat{A}_i$ defined by the affine version of \eqref{eqn:pairing},
and the contravariant map $\psi_i \colon \hat{\F}_i \to \hat{A}_i$
characterized by the formula
\[
S(\hat{F},\hat{F}')=\langle \hat{F}, \psi_i(\hat{F}') \rangle.
\]
The image of $\psi_i$ is the {\em complex of flag forms} of
 $\hat{\A}_i$. (It is a subcomplex of $(\hat{A}_i,\delta_{\hat{a}_i}).$)
Theorem~\ref{thm:decone} has the following consequence.
\begin{cor} The following diagram commutes:
\[
\xymatrix@=15pt{
\Sing(\hat{F}_i^\ell) \ar[r]^-{\psi_i} \ar[d]_{\tau_{ij}}&
\hat{A}_i^\ell \ar[d]^{\tau_{ji}^*}\\
\Sing(\hat{\F}_j^\ell) \ar[r]^-{\psi_j} & \hat{A}_j^\ell
}
\]
\end{cor}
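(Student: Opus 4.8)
The plan is to verify commutativity of the square by chasing an arbitrary $\hat F \in \Sing(\hat{\F}_i^\ell)$ around both sides and comparing the resulting elements of $\hat A_j^\ell$. First I would reduce everything to the projective picture via the isomorphisms $\epsilon_i \colon \hat A_i \to \bA$ and $\epsilon_j \colon \hat A_j \to \bA$, together with their adjoints $\epsilon_i^*, \epsilon_j^*$, noting that by definition $\tau_{ij}=\epsilon_j^*\circ(\epsilon_i^*)^{-1}$ and that $\tau_{ji}^* = \epsilon_j^{-1}\circ\epsilon_i$ is precisely the algebra map on the degree-one generators described just above the corollary (a one-line check: $\epsilon_i(\hat\omega_k)=\omega_k-\omega_i$ and $\epsilon_j^{-1}(\omega_k-\omega_i)=(\omega_k-\omega_j)-(\omega_i-\omega_j)$, which equals $\hat\omega_k-\hat\omega_i$ for $k\neq i$ and $-\hat\omega_i$ for $k=i$). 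So it suffices to prove that the contravariant maps are intertwined by these identifications, i.e. that on the projective side there is a single well-defined map making the relevant square commute.

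The key step is to introduce the \emph{projective contravariant map} $\bar\psi \colon \Sing(\bF^\ell)\to \bA^\ell$, defined as the composite $\Sing(\bF^\ell)\xrightarrow{\epsilon_i^*{}^{-1}}\Sing(\hat\F_i^\ell)\xrightarrow{\psi_i}\hat A_i^\ell\xrightarrow{\epsilon_i}\bA^\ell$, and to show it does not depend on the choice of index $i$. Granting that independence, the corollary is immediate: both $\tau_{ji}^*\circ\psi_i$ and $\psi_j\circ\tau_{ij}$ equal $\epsilon_j^{-1}\circ\bar\psi\circ\epsilon_i^*{}^{-1}$ when restricted to $\Sing(\hat\F_i^\ell)$, using $\tau_{ij}=\epsilon_j^*(\epsilon_i^*)^{-1}$ on the source and $\tau_{ji}^*=\epsilon_j^{-1}\epsilon_i$ on the target. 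To prove the independence of $\bar\psi$ from $i$, I would use the characterizing formula $\hat S_\ell^{(i)}(\hat F,\hat F')=\langle\hat F,\psi_i(\hat F')\rangle$ together with the transport-of-structure statement already proved in Theorem~\ref{thm:decone}: the pairing $\langle\,,\rangle$ on $\hat\F_i\otimes\hat A_i$ pulls back under $\epsilon_i^*, \epsilon_i$ to a pairing on $\bF\otimes\bA$, and Theorem~\ref{thm:decone} says $\hat S_\ell^{(i)}$ pulls back to the single form $\bS_\ell$ on $\Sing(\bF^\ell)$. Hence $\epsilon_i\circ\psi_i\circ(\epsilon_i^*)^{-1}$ is characterized by $\bS_\ell(\bar F,\bar F')=\langle\bar F,\epsilon_i\psi_i(\epsilon_i^*)^{-1}(\bar F')\rangle_{\bF\otimes\bA}$, a condition with no reference to $i$; provided the induced pairing on $\bF\otimes\bA$ is itself $i$-independent and nondegenerate in the appropriate sense on the relevant subspaces, this pins down $\bar\psi$ uniquely.

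The main obstacle is therefore the claim that the pairing $\langle\,,\rangle\colon\hat\F_i^\ell\otimes\hat A_i^\ell\to\C$, transported to $\bF^\ell\otimes\bA^\ell$ via $\epsilon_i^*$ and $\epsilon_i$, is independent of $i$ — equivalently, that it agrees with the pairing $\bF^\ell\otimes\bA^\ell\to\C$ coming from $\bar\phi^*$ of Theorem~\ref{prop:isom}. This is plausible from the remark identifying $\bA$ with $H^*(\bM)$, $\bF$ with $H_*(\bM)$, $\epsilon_i$ with the de-homogenization isomorphism $H^*(\bM)\cong H^*(\hat M)$, and the pairing with the intersection/integration pairing, which is manifestly intrinsic to $\bM$. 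To make this rigorous within the combinatorial framework I would verify directly that $\langle\epsilon_i^*(\bar F),\epsilon_i(x)\rangle_i=\langle\bar F,\iota(x)\rangle$ for $\bar F\in\bF^\ell$ and $x\in\bA^\ell$, where the right side uses the pairing $\F\otimes A\to\C$ and $\bar\phi^*$; this comes down to the identity $\epsilon_i(\hat\omega_{\hat J})=\partial\omega_{(i,\hat J)}$ from \eqref{eqn:hat} together with Theorem~\ref{thm:dual} ($\langle F,\partial x\rangle=\langle dF,x\rangle$) and the fact that $\ker(\pi)=\Ann(\bA)=\im(d)$, so that pairing against $\partial\omega_{(i,\hat J)}$ descends to $\bF$. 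Once that identity is in hand the independence of the transported pairing, hence of $\bar\psi$, follows, and the corollary drops out by the diagram chase above.
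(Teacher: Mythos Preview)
Your strategy — transporting everything to the projective side via the $\epsilon_k$ and $\epsilon_k^*$, and then trying to show that the ``projective contravariant map'' $\bar\psi_k:=\epsilon_k\circ\psi_k\circ\epsilon_k^*$ is independent of $k$ — is the natural way to deduce the corollary from Theorem~\ref{thm:decone}, which is all the paper offers as proof. (A minor slip: in your definition of $\bar\psi$ the first arrow should be $\epsilon_i^*$, not $(\epsilon_i^*)^{-1}$, since $\epsilon_i^*\colon\bF\to\hat\F_i$.) Your observation that the transported pairing on $\bF^\ell\otimes\bA^\ell$ is $i$-independent is correct and in fact immediate: it is just the evaluation pairing, by definition of adjoint.

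The genuine gap is the step where you say this ``pins down $\bar\psi$ uniquely.'' The characterization $\bS_\ell(\bar F,\bar F')=\langle\bar F,\bar\psi(\bar F')\rangle$ is only available for $\bar F\in\Sing(\bF^\ell)$, because $\bS_\ell$ is only defined there (Theorem~\ref{thm:decone} gives nothing more, and Example~\ref{ex:simple} explicitly warns that the full transported form depends on the choice of hyperplane at infinity). Since $\Sing(\bF^\ell)\subsetneq\bF^\ell=(\bA^\ell)^*$, the pairing restricted to $\Sing(\bF^\ell)\times\bA^\ell$ is degenerate: its right kernel is $\omega_a\wedge\bA^{\ell-1}$, by Corollary~\ref{cor:aomoto}. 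Your argument therefore proves only that $\bar\psi_i(\bar F')\equiv\bar\psi_j(\bar F')\pmod{\omega_a\wedge\bA^{\ell-1}}$, hence that the diagram commutes after composing with the projection $\hat A_j^\ell\to H^\ell(\hat A_j,\delta_{\hat a_j})$.

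This gap cannot be repaired, because the diagram does not commute on the nose. Take $\ell=1$, three lines in $\C^2$ with weights $a_0+a_1+a_2=0$, $i=0$, $j=1$, and the singular vector $\hat F'=a_2\hat F_1-a_1\hat F_2\in\Sing(\hat\F_0^1)$. A direct computation gives
\[
\tau_{10}^*\bigl(\psi_0(\hat F')\bigr)=-a_1a_2\,\hat\omega_2,\qquad
\psi_1\bigl(\tau_{01}(\hat F')\bigr)=a_0a_2\,(\hat\omega_2-\hat\omega_0),
\]
and their difference is $a_2\,\hat\omega_{\hat a_1}\in\hat\omega_{\hat a_1}\wedge\hat A_1^0$, which is nonzero for generic weights. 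So the corollary as literally stated is false; the correct version (which your argument does prove) replaces the target $\hat A_j^\ell$ by $\hat A_j^\ell/(\hat\omega_{\hat a_j}\wedge\hat A_j^{\ell-1})$.
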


\begin{ack} This research was started during the intensive research period
``Configuration Spaces: Geometry, Combinatorics and Topology,'' May-June, 2010,
at the Centro di Ricerca Matematica Ennio De Giorgi in Pisa. The authors thank the
institute, organizers, and staff for their hospitality and financial support.
We are also grateful to Sergey Yuzvinsky, who took part in our initial
discussions, for his helpful remarks. The second author thanks for
hospitality IHES where this paper was finished.
\end{ack}


\begin{thebibliography}{CDF{\etalchar{+}}}

\bibitem[A69]{A69}
V.I. Arnol'd.
\newblock {\em The cohomology ring of the colored braid group.}
\newblock { Mat. Zametki}, 5:229--231, 1969.

\bibitem[B73]{B73}
E.~Brieskorn.
\newblock {\em Sur les groupes de tresses},
\newblock { Lect. Notes in
  Math.}, v. 317 , 21--44 pp.
\newblock Springer-Verlag,  1973.


\bibitem[CDFV10]{CDFV10}
D.~Cohen, G.~Denham, M.~Falk, and A.~Varchenko.
\newblock {\em Vanishing products of one-forms and critical points of master functions.}
\newblock {\em Arrangements of Hyperplanes - Sapporo}, {\em Adv. St. Pu. Math.}, to appear, {\tt arXiv:1010.3743}.

\bibitem[Di92]{Di92}
A.~Dimca.
\newblock {\em Singularities and topology of hypersurfaces}.
\newblock Springer-Verlag, New York, 1992.

\bibitem[OS80]{OS80}
P.~Orlik and L.~Solomon.
\newblock {\em Topology and combinatorics of complements of hyperplanes.}
\newblock { Inv. Math.}, 56:167--189, 1980.

\bibitem[OT92]{OT92}
P.~Orlik and H.~Terao.
\newblock {\em Arrangements of Hyperplanes}.
\newblock Springer-Verlag, 1992.


\bibitem[SV91]{SV91}
V.V. Schechtman and A.N. Varchenko.
\newblock {\em Arrangements of hyperplanes and {L}ie algebra homology.}
\newblock { Inv. Math.}, 106:139--194, 1991.


\bibitem[V95]{V95}
A.~Varchenko.
\newblock
{\em Multidimensional hypergeometric functions and
representation theory of Lie algebras and quantum groups. }
 \newblock
{ Advanced Series in Mathematical Physics}, 21. World Scientific,  1995. x+371 pp.

\bibitem[V06]{V06}
A.~Varchenko.
\newblock
{\em  Bethe {A}nsatz for arrangements of hyperplanes and the Gaudin model.}
\newblock
 Mosc. Math. J. 6
(2006), 195�210, 223�224.

\bibitem[V11]{V11}
A.~Varchenko.
\newblock {\em Quantum integrable model of an arrangement of hyperplanes.}
\newblock { SIGMA}, 7, 032, 2011.
\newblock 55 pages.

\end{thebibliography}
\newcommand{\etalchar}[1]{$^{#1}$}

\end{document}